\documentclass[12pt]{amsart}
\usepackage{amsmath,amsfonts,amscd,bezier,amsthm,amssymb}
\usepackage{mathrsfs,mathtools}
\usepackage{color}
\usepackage{cite}
\usepackage{graphicx}
\usepackage{float}
\usepackage{enumitem,xfrac,nccmath}
\usepackage{tasks, multicol,yhmath}

\setlength{\topmargin}{-0.1 in} \setlength{\textwidth}{16.5cm}
\setlength{\textheight}{22cm} \setlength{\oddsidemargin}{-0.2cm}
\setlength{\evensidemargin}{-0.2cm}

\usepackage[pagewise]{lineno}

\newtheorem{theorem}{Theorem}[section]
\newtheorem{lemma}[theorem]{Lemma}
\newtheorem{corollary}[theorem]{Corollary}
\theoremstyle{definition}

\newtheorem{remark}[theorem]{Remark}

\newtheorem{conjecture}[theorem]{Conjecture}

\newcommand{\R}{\mathbb{{R}}}
\newcommand{\Z}{\mathbb{{Z}}}
\newcommand{\N}{\mathbb{{N}}}
\newcommand{\T}{\mathbb{{T}}}
\newcommand{\C}{\mathbb{{C}}}
\newcommand{\D}{\mathbb{{D}}}

\renewcommand{\Re}{\mathfrak{Re}}
\renewcommand{\Im}{\mathfrak{Im}}

\DeclareMathOperator{\essen}{ess}

\numberwithin{equation}{section}

\allowdisplaybreaks

\makeatletter
\DeclareRobustCommand{\eqrefp}[2]{%
	\textup{\tagform@{\refp{#1}{#2}}}}
\makeatother
\DeclareRobustCommand{\refp}[2]{%
	\expandafter\ifx\csname r@#1\endcsname\relax
	\textbf{??}%
	\else
	\edef\areferencia{\ref{#1}-)}%
	\expandafter\eqrefpaux\areferencia-#2%
	\fi}
\def\eqrefpaux#1-#2){#1}

\begin{document}
	\title[Descriptions of cyclic functions]{Towards spectral descriptions of cyclic functions}

        \author[M. Monsalve-López]{Miguel Monsalve-López}
	\address[M. Monsalve-López]{Departamento de Análisis Matemático y Matem\'atica Aplicada, Facultad de Ciencias Matemáticas, Universidad Complutense de Madrid, Spain}
	\email{migmonsa@ucm.es}

	\author[D. Seco]{Daniel Seco}
	\address[D. Seco]{Departamento de An\'alisis Matem\'atico e IMAULL,  Universidad de La Laguna, Avenida Astrof\'isico Francisco S\'anchez, s/n, 38206 San Crist\'obal de La Laguna, Santa Cruz de Tenerife,  Spain} \email{dsecofor@ull.edu.es}

	\thanks{The first named author has been supported by Plan Nacional I+D Grant no. PID2022-137294NB-I00 (Spain). The second named author is funded through grant RYC2021-034744-I by the Ram\'on y Cajal programme from Agencia Estatal de Investigaci\'on (Spanish Ministry of Science, Innovation and Universities).}

	\subjclass[2020]{Primary: 47B32; Secondary: 30J05, 47A10, 47A16.}
	
	\keywords{Inner functions; cyclic functions; multiplication operators; reproducing kernel Hilbert spaces; point spectrum.}

	\date{\today}
	
	\begin{abstract} 
	
We build on a characterization of inner functions $f$ due to Le, in terms of the spectral properties of the operator $V=M_f^*M_f$ and study to what extent the cyclicity on weighted Hardy spaces $H^2_\omega$ of the function $z \mapsto a-z$ can be inferred from the spectral properties of analogous operators $V_a$. We describe several properties of the spectra that hold in a large class of spaces and then, we focus on the particular case of Bergman-type spaces, for which we describe completely the spectrum of such operators and find all eigenfunctions.
	\end{abstract}
	
	\maketitle
	
\section{Introduction}

In classical complex function theory, the concepts of inner and cyclic functions play a prevalent role. A function $f\colon \D \rightarrow \C$ is called \emph{inner} if its modulus is bounded by 1 at every point and its radial limit boundary values have modulus equal to 1 at almost every point of the boundary. A theorem of Beurling provides a complete description of invariant subspaces under the shift operator in the Hardy space $H^2$ based on inner functions and this is a cornerstone of modern complex analysis. A function is called \emph{cyclic} (in a space $H$ of analytic functions) if it is contained in no proper invariant subspace of $H$. Thanks to a factorization obtained by Smirnov, Beurling's Theorem also implies a complete description of cyclic functions in $H^2$. For more on this topic, we refer the reader to \cite{Gar}.
In trying to develop a similar theory suited to spaces of analytic functions other than $H^2$, several generalizations of inner functions have been proposed since the work of Rudin in the 1960s. One of these concepts is particularly based on the inner product properties that make inner functions special for invariant subspaces of a given Hilbert space $H$:
We say $f \in H$ is a \emph{$H-$inner} function if it has norm 1 in $H$ and \[\langle f , z^j f \rangle = 0, \qquad j \neq 0.\]
It is easy to check that, under mild assumptions, the normalized orthogonal projection of the constant function $1$ onto an invariant subspace is a $H$-inner function and this has numerous consequences \cite{Remarks}. Recall that a  \emph{multiplier} $f$ in a space $H$ is a function for which the operator $M_f$ of multiplication by $f$ acts boundedly on $H$. We are going to study properties of $M_f$ through the mirror of its precomposition with its adjoint $M^*_f$, acting on certain function spaces that are described as follows.
Throughout this article $\omega=\{\omega_n\}_{n \in \N}$ will denote a monotonic positive sequence satisfying $\omega_0=1$, $\lim_{n\rightarrow \infty} \frac{\omega_{n+1}}{\omega_n} = 1$, and \begin{equation}\label{eqnA1}
\sum_{n=0}^\infty \left(1-\frac{\omega_{n+1}}{\omega_{n}}\right)^2  < \infty.\end{equation} We will then say that $\omega$ is a \emph{valid} weight. In that case, we will denote by \[C_0= \sup_{n \in \N} \frac{\omega_{n+1}}{\omega_n}; \quad C_1= \sup_{n \in \N} \frac{\omega_{n}}{\omega_{n+1}}.\]
Finally, denote by $H^2_\omega$ the space of holomorphic functions $f$ over the unit disc of the complex plane with Maclaurin coefficients $\{a_n\}_{n\in \N}$ satisfying 
\begin{equation}\label{eqnB1}
\|f\|^2_\omega := \sum_{n=0}^\infty |a_n|^2 \omega_n < \infty.
\end{equation} $H^2_\omega$ is called a \emph{weighted Hardy space} with weight $\omega$. The conditions on the space guarantee several properties: \begin{enumerate}
\item[(P1)] Polynomials form a dense subset. \item[(P2)] Both the \emph{shift} operator $M_z$ and its left-inverse are bounded in $H^2_\omega$ (with norms given in terms of $C_0$ and $C_1$). \item[(P3)] The unit disc is the common domain for the elements of the space. \item[(P4)] The assumption \eqref{eqnA1} guarantees that we can make use of an important theorem in spectral theory while still including a large class of relevant examples. \item[(P5)] By the monotonicity, either $C_0=1\leq C_1$ or $C_1=1\leq C_0$. \item[(P6)] The space $H^2_\omega$ is an example of a reproducing kernel Hilbert space over the disc (see next Section for the definition). \end{enumerate} Bergman-type spaces $A^2_\beta$ ($\beta > -1$) will attract some of the focus in the present article. $H^2_\omega$ where $\omega_n = \binom{n+ \beta +1}{n}^{-1}$.  With this weight, the norm is also given by 
\[\|f\|^2_\omega= (\beta+1) \int_{\D} |f(z)|^2 (1-|z|^2)^\beta dA(z),\] where $dA$ denotes the normalized element of area measure. We prefer the notation $\alpha = -1-\beta$, which allows to extend the range of spaces further to include not only the values for $\alpha <0$ but up to $\alpha <1$. For $\alpha \geq 0$, these are usually referred to as Dirichlet-type spaces, but our choice of norm is only equivalent to the usual norm in such spaces, with the exception of the special values $\alpha = -1$ (referred to as the Bergman space, $A^2$), and $\alpha=0$ (the Hardy space, $H^2$). These weights are non-decreasing when $\alpha \in [0,1)$ and non-increasing when $\alpha \leq 0$. The limit case $\alpha=1$ is not included in this notation but it can be understood to correspond to the well studied classical Dirichlet space, where $\omega_k = k+1$, but after studying the previous cases in detail we will make comments about this particular space. In previous work by our second author \cite{Seco}, a characterization of $H$-inner functions for the Dirichlet space in terms of multiplication properties was found, building on previous work by Richter and Sundberg \cite{RS93}. This characterization was then expressed in operator-theoretic terms by Le \cite[Theorem 2.8]{Le} in a strikingly simple form that is valid in a huge class of reproducing kernel Hilbert spaces:
\begin{theorem}[Le] \label{TLe}
Let $\omega$ be valid and $f$ be a multiplier on $H^2_\omega$. Then the following are equivalent:
\begin{enumerate}
\item[(a)] $f$ is $H^2_\omega$-inner.
\item[(b)] $M_f^*M_f 1 =1$.
\end{enumerate}
\end{theorem}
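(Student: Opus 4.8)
The plan is to convert both conditions (a) and (b) into one and the same identity about the weighted pairing $\langle\,\cdot\,,\cdot\,\rangle_\omega$ and then exploit density of polynomials.

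Since $f$ is assumed to be a multiplier, $M_f$ is bounded on $H^2_\omega$, and hence so are $M_f^*$ and $V:=M_f^*M_f$; moreover $M_f 1 = f\in H^2_\omega$. Thus (b) is equivalent to $\langle V1,g\rangle_\omega=\langle 1,g\rangle_\omega$ for every $g\in H^2_\omega$, that is,
\[
\langle f, fg\rangle_\omega \;=\; \langle 1,g\rangle_\omega \qquad\text{for all }g\in H^2_\omega .
\]
I would then observe that both sides are continuous in $g$ — the left-hand side because $g\mapsto fg=M_f g$ is bounded and the inner product is continuous, the right-hand side trivially — so by property (P1) the displayed identity holds for all $g\in H^2_\omega$ if and only if it holds for every monomial $g=z^j$ with $j\ge 0$. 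For such $g$ one has $\langle 1,z^j\rangle_\omega=\omega_0\,\delta_{j,0}=\delta_{j,0}$, so, using that multiplication commutes ($fz^j=z^jf$), the identity decomposes into $\|f\|_\omega^2=1$ (the case $j=0$) together with $\langle f,z^jf\rangle_\omega=0$ for all $j\ge 1$ (the cases $j\ge1$).

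It then remains to match this against the definition of an $H^2_\omega$-inner function: $f$ has unit norm and $\langle f,z^jf\rangle_\omega=0$ for all $j\ne0$. The relation indexed by $-j$ (for $j\ge1$) is to be read as $\langle z^jf,f\rangle_\omega=0$, the complex conjugate of the one indexed by $+j$, so imposing the relations for $j\ge1$ already imposes all of them. Hence both (a) and (b) are equivalent to the conjunction ``$\|f\|_\omega=1$ and $\langle f,z^jf\rangle_\omega=0$ for $j\ge1$'', which yields the stated equivalence.

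I do not expect a genuine obstacle here: the whole content is the bookkeeping around the displayed identity, and the only facts used are that $f$ is a multiplier (so that $V$ is bounded and $fg$ stays in $H^2_\omega$) and density of polynomials, property (P1). None of the finer requirements in the definition of a valid weight enter at this stage; they become relevant only once spectral-theoretic tools (cf. property (P4)) are applied to operators of the form $V_a$.
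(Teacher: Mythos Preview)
Your argument is correct: expanding (b) against the orthonormal basis and using density of polynomials reduces the identity $M_f^*M_f 1=1$ precisely to $\|f\|_\omega=1$ together with $\langle f,z^jf\rangle_\omega=0$ for $j\ge 1$, and conjugation handles the remaining indices. Note, however, that the paper does not supply its own proof of this theorem; it is quoted as \cite[Theorem~2.8]{Le} and used as a starting point, so there is no in-paper argument to compare against --- your write-up simply fills in the elementary verification that the cited result encodes.
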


On the other hand, cyclic functions play a dual role to that of $H$-inner functions, since they are exactly those for which the orthogonal projection  of the function 1 onto their corresponding invariant subspace is, again, the function 1. A complete description of cyclic functions has not even been achieved in long studied spaces such as the \emph{Dirichlet space} $D$, in which the Brown-Shields conjecture (a proposed characterization) has been open for over 40 years, becoming a fundamental problem in the fields of Complex Analysis and Operator Theory. A good reference for this problem is \cite{EFKMR}. Cyclic functions in weighted Hardy spaces were studied in \cite{FMS1}. The theorems there and in Le's work actually do not need (or mention) the assumption \eqref{eqnA1}.

However, it is well understood when a \emph{polynomial} is cyclic in a space. A polynomial is cyclic in $H^2_\omega$ if and only if it has no zeros at the points where point evaluation is a bounded functional (essentially, either $\D$, for spaces with $\sum_{j=0}^\infty \frac{1}{\omega_j} = + \infty$; or $\overline{\D}$, for spaces where that sum is convergent).

Since $H$-inner functions $f$ can be described, as in Theorem \ref{TLe}, in terms of the eigenvalue properties of the operator $M_f^* M_f$, it seems reasonable to ask the following: 
\[\textbf{Main question:} \quad \textit{What information can be obtained from $M_f^* M_f$ on whether $f$ is cyclic?}\] Unless $f$ is constant, if part (b) of the above Theorem is satisfied then $f$ is not cyclic, but it seems plausible that weaker necessary conditions may be found.

With the long term goal of identifying such features, a toy problem we will focus on here is that of \emph{describing the spectrum, point spectrum, and corresponding eigenfunctions, of $M_f^* M_f$} for a family of functions of the simplest form possible containing inner and cyclic functions: $f(z) = a-z$ for $a \in \mathbb{C}$. We would expect that the spectral information would contain information on whether $f$ is cyclic and in particular have a transition in terms of the values of $a$ according to whether $a \in \mathbb{D}$, $a \in \T$ or $a \in \mathbb{C} \backslash \overline{\mathbb{D}}$. This will be apparent in our Theorem \ref{superthm}, where (in any $H^2_\omega$ space and for any $a \in \C \backslash \{0\}$) we describe the spectrum completely except for the determination of two positive sequences that may compose the point spectrum. The essential spectrum is then the interval $[(1-|a|)^2, (1+|a|)^2]$, and indeed, we find out that $a\in \T$ happens if and only if $V_a:=M^*_{a-z}M_{a-z}$ is not invertible, that is, $0 \in \sigma(V_a)$.
Then we will obtain descriptions of the point spectra as well as all eigenfunctions in Theorems \ref{thm:dalphanegativo}, \ref{thm:dalpha01},  below, covering the values $\alpha < 0$ and $\alpha \in (0,1)$, respectively. Since cyclicity does not depend on the choice of equivalent norm on a particular Hilbert space, we feel entitled to study the phenomenon with our preferred choice of equivalent norms, that will have the advantage of making eigenvalue equations into rather simple ODEs. This simplicity breaks when we study the Dirichlet space, and hence, we will see some limitations to the scope of our results.  In a previous work, the integrability of differential equations arising from some extremal problems relevant to cyclic functions was made easier under the Bergman-type choice of equivalent norm in \cite{RevMatIber}. We observe some similarity between the two processes and we believe there is a deeper connection between the two phenomena. 

Unfortunately, the two ranges of spaces we will study according to the sign of $\alpha$ will send contradicting messages: for $\alpha <0$ the transition that will happen at the unit circle will make the point spectrum change from a convergent sequence to the empty set; meanwhile, for $\alpha \in (0,1)$, the point spectrum when $a \in \C \backslash \D$ will be formed by the union of two convergent sequences. Therefore, there seems to be no easy necessary or sufficient condition for cyclicity to be identified in terms of the point spectra.
When it comes to studying the Hardy space $H^2$ ($\omega \equiv 1$), it should be noted that the lack of point spectrum is well established, and that a theorem of Hartman and Wintner \cite{HartWint} gives every answer needed: We denote by $T_\varphi$ the Toeplitz operator with symbol $\varphi \in L^\infty(\T)$. This is the operator on $H^2$ consisting of multiplying by $\varphi$ and then projecting from $L^2$ onto $H^2$. With this notation, and for any bounded analytic function $f$, our operator $M^*_f M_f$ is just $T_{|f|^2}$, and Hartman and Wintner described the spectra of all self-adjoint Toeplitz operators:
\begin{theorem}
Let $\varphi: \T \rightarrow \R$ with $\varphi \in L^\infty$. Then \[\sigma(T_\varphi) = \sigma_{ess}(T_\varphi) = [\essen \inf_{\theta \in [0,2\pi)} \varphi (e^{i \theta}) , \essen \sup_{\theta \in [0,2\pi)} \varphi (e^{i \theta})].\] 
\end{theorem}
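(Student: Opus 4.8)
The plan is to prove the three assertions in the displayed formula separately. Write $m:=\essen\inf_{\theta}\varphi(e^{i\theta})$ and $M:=\essen\sup_{\theta}\varphi(e^{i\theta})$. Since $\varphi$ is real-valued, $T_\varphi$ is bounded on $H^2$ with $\|T_\varphi\|\le\|\varphi\|_\infty$ and self-adjoint, and for each $f\in H^2$ one has, using that the orthogonal projection $P\colon L^2(\T)\to H^2$ fixes $f$ and is self-adjoint, $\langle T_\varphi f,f\rangle=\langle P(\varphi f),f\rangle=\langle\varphi f,f\rangle=\int_{\T}\varphi\,|f|^2\,dm$, with $dm$ normalized Lebesgue measure on $\T$. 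Hence $m\|f\|^2\le\langle T_\varphi f,f\rangle\le M\|f\|^2$, which for a self-adjoint operator forces $\sigma(T_\varphi)\subseteq[m,M]$. If $m=M$ then $\varphi$ is a.e.\ constant, $T_\varphi$ is a scalar multiple of the identity, and all three statements are immediate; so from now on I assume $m<M$.

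For $(m,M)\subseteq\sigma(T_\varphi)$ I would argue by contradiction, in the spirit of the original proof. Fix $\lambda\in(m,M)$, put $\psi:=\varphi-\lambda\in L^\infty(\T)$ (real), and suppose $T_\psi=T_\varphi-\lambda I$ is invertible. Let $g:=T_\psi^{-1}1\in H^2$, so that $P(\psi g)=1$; since $(H^2)^\perp=\overline{zH^2}$ in $L^2(\T)$, this says $\psi g=1+\overline{zk}$ for some $k\in H^2$. Multiplying by $\bar g$ gives $\psi|g|^2=\bar g+\overline{zkg}$, where $\bar g\in\overline{H^2}$ and $zkg\in H^1$ vanishes at the origin, so $\psi|g|^2$ is an $L^1$ function lying in $\overline{H^1}$. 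As it is real-valued, its Fourier coefficients vanish off $\{n\le0\}$ and, by reality, also off $\{n\ge0\}$; hence $\psi|g|^2$ equals a constant $c\in\R$ a.e. If $c=0$ then $\psi|g|^2=0$ a.e.; since $g\not\equiv0$ and a nonzero $H^2$ function is non-null a.e.\ on $\T$ (log-integrability), this forces $\psi=0$ a.e., contradicting $\lambda<M$. Hence $c\ne0$ and $\psi=c/|g|^2$ a.e., so $\psi$ has constant sign; but $m<\lambda<M$ forces both $\{\psi<0\}$ and $\{\psi>0\}$ to have positive measure — a contradiction. Thus $(m,M)\subseteq\sigma(T_\varphi)$, and since $\sigma(T_\varphi)$ is closed and contained in $[m,M]$, we conclude $\sigma(T_\varphi)=[m,M]$.

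Finally, $\sigma(T_\varphi)=\sigma_{ess}(T_\varphi)$ will follow once I show $T_\varphi$ has no eigenvalues, since for a self-adjoint operator $\sigma\setminus\sigma_{ess}$ consists only of isolated eigenvalues of finite multiplicity. If $T_\varphi f=\lambda f$ with $f\in H^2\setminus\{0\}$, then $P((\varphi-\lambda)f)=0$, i.e.\ $(\varphi-\lambda)f\in\overline{zH^2}$; writing $(\varphi-\lambda)f=\overline{zk}$ and multiplying by $\bar f$ as above shows $(\varphi-\lambda)|f|^2=\overline{zkf}\in\overline{zH^1}$ is real-valued, hence identically $0$ (a real function in $\overline{H^1}$ is constant, and this one has zero mean). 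Since $|f|>0$ a.e.\ on $\T$, this gives $\varphi=\lambda$ a.e., contradicting $m<M$. Therefore $T_\varphi$ has no point spectrum and $\sigma(T_\varphi)=\sigma_{ess}(T_\varphi)=[m,M]$.

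The step I expect to be the main obstacle is the rigidity argument in the second paragraph — extracting from the single relation $P(\psi g)=1$ that $\psi|g|^2$ is a nonzero constant. It hinges on invoking cleanly a few standard facts about Hardy spaces: that a product of conjugate-analytic $H^2$ functions is conjugate-analytic and in $H^1$ (with an extra vanishing at $0$ when one factor has one), that a real-valued function in $\overline{H^1}$ is constant, and that a nonzero $H^2$ function cannot vanish on a boundary set of positive measure. Granting these, the same computation yields both the absence of spectral gaps and the absence of eigenvalues; the remaining ingredients are routine self-adjoint operator theory.
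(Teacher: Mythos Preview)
The paper does not supply a proof of this statement: it is quoted in the Introduction as the Hartman--Wintner theorem with a reference to \cite{HartWint}, and no argument is given. So there is no ``paper's own proof'' to compare against.

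That said, your argument is correct and is essentially the classical Hartman--Wintner proof. The quadratic-form bound gives $\sigma(T_\varphi)\subseteq[m,M]$; the key step --- showing that if $T_{\varphi-\lambda}$ were invertible then $(\varphi-\lambda)|g|^2$ is a real $\overline{H^1}$ function and hence constant, forcing $\varphi-\lambda$ to have a sign --- is exactly the rigidity mechanism in the original paper. The eigenvalue exclusion via the same $\overline{H^1}$ trick is likewise standard. The Hardy-space facts you flag (products of $H^2$ functions lie in $H^1$, real $\overline{H^1}$ functions are constant, nonzero $H^2$ functions are nonvanishing a.e.\ on $\T$) are all textbook results, so there is no genuine gap.
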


Two remarks are in order at this point: firstly, in the case of the Hardy space, the operator $M^*_fM_f$ depends only on the modulus of $f$ on the boundary and so, it cannot detect whether $f$ has an inner part or not. In this sense, the answer to our main question for this space is, clearly, there is no relation whatsoever with cyclicity. However, this obstruction only happens exactly in $H^2$, and we will accordingly observe different spectral behaviour for the operator $M^*_f M_f$ when acting on $H^2$ compared to anywhere else.

So the reader may be asking \emph{why should anyone be optimist about ever finding any such information in other spaces?} Despite the $H^2$ negative result, the truncations of $M^*_f M_f$ to the finite dimensional spaces $\mathcal{P}_n$ encapsulate all information about the cyclicity of the function except for the value of $f(0)$. This can be seen from the description of optimal polynomial approximants (polynomials studied on \cite{FMS1} and which characterize the cyclic behaviour) since these are obtained from a linear system in which the only information is the action of $M^*_f M_f$ on the orthogonal basis of $H^2_\omega$ given by monomials and the value of $f(0)$. The matrices studied in that context do not correspond exactly with the truncations of the operator as we will understand it, due only to the lack of normalization of the basis of monomials.

On a final comment, it would be reasonable to normalize $f$ to have supremum modulus equal to 1 over the disc. This could lead to connections with de Branges-Rovnyak spaces or the general theory of contractions on Hilbert spaces, where the defect operator $(I-M^*_bM_b)^{1/2}$ is known to play an important role. In our results, this could be interpreted by dividing each eigenvalue by $(|a|+1)^2$ and performing a related transformation on the eigenfunctions. This could simplify the appearance of the spectrum. However, the natural norm for such normalization in spaces smaller than $H^2$ seems to be the one associated with multipliers and since this norm is more complicated to work with in general, we decided to stay with the form of $f$ we chose initially. Some of our results can be interpreted as giving efficient lower bounds for the multiplier norms of $a-z$ in various spaces, but in Bergman-type spaces these norms are well known.

The plan of this article is as follows: we start in Section \ref{prelim}, by presenting basic properties of the spaces and the norms that we will use as well as generalities about reproducing kernel Hilbert spaces and multiplication operators acting there; then, in Section \ref{recusect}, for each space $H^2_\omega$ and value of $a \in \C \backslash \{0\}$, we show how to obtain a recurrence relation for the coefficients of an eigenfunction. We continue with an exploration of the whole spectrum in Section \ref{completespec}, achieving then a complete description of the spectral properties up to describing the point spectrum. When one particularizes in the case of our Bergman-type norms, the recurrence relations will lead to a homogeneous first order differential equation. This differential equation will be solved completely in Section \ref{pointspecsect} for each of the spaces. We explore the problem in the Dirichlet space in Section \ref{Dirichlet}, where the main issue will be a lack of homogeneity of the ODE obtained, leading us to the study of hypergeometric functions. We conclude in Section \ref{furthersect} with some further remarks and suggestions for future research.

\section{Preliminaries}\label{prelim}

Consider a weighted Hardy space $H^2_\omega$. The associated inner product is
\[
    \langle f,g\rangle_{H^2_\omega} \coloneqq \sum_{n = 0}^\infty a_n \overline{b_n} \omega_n
\]
for each $f(z) = \sum_{n = 0}^\infty a_n z^n$ and $g(z) = \sum_{n = 0}^\infty b_n z^n$ in the space. This is a clear example of \emph{reproducing kernel Hilbert spaces} over the unit disc $\D$. That means that the evaluation map $f \mapsto f(\lambda)$ is a bounded linear functional on $H_\omega^2$ for each $\lambda \in \D$. Accordingly, there exists an element of $H^2_\omega$, called the \emph{reproducing kernel} $\kappa_\lambda \in H^2_\omega$ such that \[\langle f,\kappa_\lambda\rangle = f(\lambda), \qquad \forall f \in H^2_\omega.\]  In fact, it is an easy exercise to check that $\kappa_\lambda$ is given by 
\[\kappa_\lambda (z) = \sum_{j=0}^\infty \frac{(\overline{\lambda}z)^j}{\omega_j}, \qquad \forall z \in \D.\] 
Several classical and well-known examples of weighted Hardy spaces are the Hardy space $H^2$ (with weight $\omega_n \equiv 1$), with norm also given by
\[
	\|f\|_{H^2} = \sup_{0 < r < 1}\bigg(\int_{0}^{2\pi} |f(re^{i\theta})|^2\, \mathrm{d}\theta\bigg)^{1/2};
\]
the Bergman space $A^2$ (which is the weighted Hardy space with weight $\omega_n = \tfrac{1}{n+1}$), whose norm is also given by
\[
	\|f\|_{A^2} = \bigg(\int_{\mathbb{D}}|f(z)|^2\, \mathrm{d}A(z)\bigg)^{1/2},
\]
where $dA$ denotes the normalized Lebesgue area measure on $\D$; and the Dirichlet space $D$ (with weight $\omega_n = n+1$), with norm
\[
	\|f\|_{D} \coloneqq \Big(\|f\|^2_{H^2} + \|f'\|^2_{A^2}\Big)^{1/2} < +\infty.
\]
Regardless of the choice of equivalent norm, a relevant and well-studied class of spaces is that of the \emph{Dirichlet-type spaces} $D_\alpha$ (for $\alpha \in \R$), corresponding to $H^2_\omega$ for the weight $\omega_n = (n+1)^\alpha$. The norm thus defined is often denoted $\|\cdot\|_\alpha$, and the cases $\alpha=-1, 0, 1$ correspond, respectively with $A^2$, $H^2$ and $D$. Furthermore, there is a continuous inclusion $D_\alpha \subsetneq D_\beta$ whenever $\beta<\alpha$ (indeed, note that $\|f\|_\beta \leq \|f\|_\alpha$ for every $f \in D_\alpha$) and for any $\alpha$ and $f \in Hol(\D)$ we have $f \in D_\alpha$ if and only if $f' \in D_{\alpha-2}$. For each $\alpha <1$, the weight $\omega_k = \binom{n-\alpha}{n}^{-1}$ gives an equivalent norm in the Dirichlet-type space $D_\alpha$ and we will refer to this norm 

\medskip

On the other hand, we will make use of the class of \emph{weighted Bergman spaces} $A^2_{\beta}$ (for $\beta > - 1$) of functions $f \in Hol(\D)$ such that
\[
	\|f\|_{A^2_\beta} = \bigg((\beta + 1)\int_{\mathbb{D}}|f(z)|^2 (1-|z|^2)^\beta\, \mathrm{d}A(z)\bigg)^{1/2}.
\]

Notice that the $\|\cdot\|_{A^2_\beta}$ norm is equivalent to $\|\cdot\|_\alpha$ whenever $\alpha=-1-\beta <0$ and it is well known that $A^2_\beta$ is a weighted Hardy space with weight $\omega_n = \binom{n-\alpha}{n}^{-1}$. In fact, this weight can be used to define a Dirichlet-type space equivalent norm for $\alpha <1$. For $\alpha >-1$, one can also choose the equivalent norm given by weight $\binom{n+\alpha}{n}$. These are the two different choices our approach will be specifically good for, although we will focus on the former and make use of the latter only when $\alpha = 1$ down below.

A holomorphic function $\varphi$ is called a \emph{multiplier} on a space $H$ if for each $f \in H$, the pointwise product $\varphi f$ belongs to $H$. Each multiplier $\varphi$ defines a linear bounded multiplication operator $M_\varphi \colon H \to H$, whose norm
\[
	\|M_\varphi\|_{H \to H} = \|\varphi\|_{\mathrm{Mult}(H)}
\]
is the \emph{multiplier norm} of $\varphi$ on $H$. The space of all multipliers with this norm, $\mathcal{M}_H$, is a Banach space. It is obviously closed under multiplication and if $1 \in H$ (like in our case, for all $H^2_\omega$ spaces), then $\mathcal{M}_H \subset H$. Even though the norm on $\mathcal{M}_H$ depends on the choice of equivalent norm imposed on $H$, the actual elements do not depend on this choice. Hence, when $H=D_\alpha$, we denote $\mathcal{M}_H \coloneqq \mathcal{M}_\alpha$. The class of polynomials $\mathcal{P}$ is contained in $\mathcal{M}_\alpha$, for all $\alpha \in \R$. For more general functions, there is a clear transition as $\alpha$ changes in $\R$ in terms of multipliers: for $\alpha \leq 0$, $\mathcal{M}_\alpha = H^{\infty}$ (the Banach space of bounded analytic functions with the uniform norm); as $\alpha$ changes from $0$ to $1$ it becomes increasingly difficult for a function to be a multiplier, but for $\alpha >1$, elements of the space $D_\alpha$ are so regular that they are automatically multipliers. A complete characterization for $\alpha \in (0,1]$ was achieved by Stegenga \cite{Stegenga} but this is not elementary and  we skip the details. A sufficient condition for a function $f \in D$ to be a multiplier of $D$ is that $f' \in H^\infty$, and this is enough for the purposes of this paper, since we will only deal with multiplication by polynomials.

\section{Recurrence relations}\label{recusect}

We denote the canonical basis $e_n(z) = z^n/\sqrt{\omega_n}$. This is an orthonormal basis for $H^2_\omega$ (and in fact, formed by $H^2_\omega$-inner functions). Recalling the above notation, from now on, fixed $a \in \C$, we denote by $V_a$ the operator 
\[V_a \coloneqq M^*_{a-z}M_{a-z}.\] We will use the convention that $h_n$ denotes the $n$-th Maclaurin coefficient of a holomorphic function $h$ and $h_{-1}\coloneqq 0$. Eigenfunctions for $V_a$ have several special properties, and we start with the following:

\begin{lemma}\label{lemma-recurrencia}
    Let $a \in \mathbb{C} \setminus\{0\}$ and consider the operator $V_a$ acting on $H_\omega^2$. For each $\lambda \in \sigma_{\text{\normalfont p}}(V_a)$, the associated eigenfunction $h(z)= \sum_{n \geq 0} h_n z^n$ comes given by the unique (up to normalization) solution to the recurrence relation
    \[
    	|a|^2h_n  -  ah_{n+1}\frac{\omega_{n+1}}{\omega_n}  -  \overline{a}h_{n-1}  +  h_n\frac{\omega_{n+1}}{\omega_n} = \lambda h_n, \quad \text{for each } n \geq 0.
    \]
\end{lemma}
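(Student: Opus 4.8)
The key observation is that for an eigenfunction $h$ with eigenvalue $\lambda$, the equation $V_a h = \lambda h$ must hold coefficient by coefficient against the orthonormal basis $\{e_n\}$. The plan is to compute $\langle V_a h, e_n\rangle$ explicitly. Since $V_a = M^*_{a-z}M_{a-z}$, we have $\langle V_a h, e_n\rangle = \langle M_{a-z} h, M_{a-z} e_n\rangle$, so I first need to understand the action of $M_{a-z}$ on basis elements and on $h$.



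**Setting up the computation.** The plan is to read off the eigenvalue equation $V_a h = \lambda h$ coordinatewise with respect to the orthogonal basis of monomials. Since $a-z$ is a polynomial, it is a multiplier on every $H^2_\omega$, so $M_{a-z}$ is bounded and $V_a = M_{a-z}^*M_{a-z}$ is a well-defined bounded positive operator; in particular, for every $h \in H^2_\omega$ and every $n \geq 0$,
\[
\langle V_a h, z^n\rangle_\omega = \langle M_{a-z}^*M_{a-z}h,\, z^n\rangle_\omega = \langle M_{a-z}h,\, M_{a-z}z^n\rangle_\omega .
\]
First I would record the action of $M_{a-z}$ on Maclaurin coefficients: writing $h(z) = \sum_{k\geq 0} h_k z^k$, one has $(a-z)h(z) = \sum_{k\geq 0}(a h_k - h_{k-1})z^k$ with the convention $h_{-1} = 0$, while $(a-z)z^n = a z^n - z^{n+1}$ has exactly two nonzero coefficients, namely $a$ in degree $n$ and $-1$ in degree $n+1$.

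**Extracting the recurrence.** Next I would expand the right-hand inner product with the definition $\langle f,g\rangle_\omega = \sum a_k\overline{b_k}\omega_k$. Only the degrees $n$ and $n+1$ contribute, so
\[
\langle M_{a-z}h,\, M_{a-z}z^n\rangle_\omega = (a h_n - h_{n-1})\,\overline{a}\,\omega_n + (a h_{n+1} - h_n)(-1)\,\omega_{n+1}.
\]
On the other hand $\langle \lambda h, z^n\rangle_\omega = \lambda h_n \omega_n$. Equating the two and dividing by $\omega_n > 0$ gives
\[
|a|^2 h_n - \overline{a}\,h_{n-1} - a\,h_{n+1}\frac{\omega_{n+1}}{\omega_n} + h_n\frac{\omega_{n+1}}{\omega_n} = \lambda h_n ,
\]
which is precisely the stated relation; the instance $n=0$ is automatically included thanks to $h_{-1}=0$ and $\omega_0 = 1$. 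Conversely, since $\{z^n\}_{n\geq 0}$ spans a dense subspace, a function $h \in H^2_\omega$ whose coefficients obey this relation for all $n$ satisfies $\langle V_a h - \lambda h, z^n\rangle_\omega = 0$ for all $n$, hence $V_a h = \lambda h$.

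**Uniqueness up to normalization.** Because $a \neq 0$ and $\omega_{n+1}/\omega_n > 0$, the recurrence can be solved for $h_{n+1}$ in terms of $h_n$ and $h_{n-1}$; together with $h_{-1}=0$ this means the single scalar $h_0$ determines every coefficient $h_n$. If $h_0 = 0$, an immediate induction forces $h \equiv 0$, which is excluded for an eigenfunction, so $h_0 \neq 0$ and the eigenfunction is unique once we fix its normalization. I do not expect a genuine obstacle here: the whole argument is bookkeeping, and the only points demanding a little care are the correct use of $\langle M_{a-z}^*M_{a-z}h, g\rangle = \langle M_{a-z}h, M_{a-z}g\rangle$, the identification of the two nonzero coefficients of $(a-z)z^n$, and verifying that the $n=0$ case is consistent with the convention $h_{-1}=0$.
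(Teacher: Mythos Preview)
Your proposal is correct and follows essentially the same route as the paper: both compute $\langle V_a h, z^n\rangle_\omega$ by moving the adjoint across and expanding $\langle (a-z)h,(a-z)z^n\rangle_\omega$, then read off the recurrence coefficientwise. You additionally spell out the converse direction and the uniqueness argument (solving for $h_{n+1}$ using $a\neq 0$ and inducting from $h_{-1}=0$), which the paper leaves implicit.
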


It seems adequate to point out that when $a=0$, the Lemma still holds except for the fact that the equation cannot be considered a recurrence relation.

\begin{proof}
    Let $\lambda \in \mathbb{C}$ and consider $h(z) = \sum_{n = 0}^\infty h_n z^n \in H^2_\omega$. Since $e_n(z) = z^n/\sqrt{\omega_n}$ form an orthonormal basis on $H^2_\omega$, one has
    \[
    	\begin{aligned}
    	V_a h(z)&  = \sum_{n = 0}^\infty \langle M_{a-z}^*M_{a-z} h, e_n\rangle e_n(z) \\
    	& = \sum_{n = 0}^\infty \langle ah(z)-zh(z), az^n-z^{n+1}\rangle \frac{z^n}{\omega_n}. 	\end{aligned}
    \]
Now, simply distributing the inner products, and using the convention that $h_{-1} = 0$, gives
\[
    	V_a h(z) = \sum_{n = 0}^\infty \big(|a|^2h_n\omega_n  -  ah_{n+1}\omega_{n+1}  -  \overline{a}h_{n-1}\omega_n  +  h_n\omega_{n+1} \big)\frac{z^n}{\omega_n}.
    \]
     Hence, the eigenvalue equation \[V_a h(z) = \lambda h(z)\] becomes equivalent to the recurrence relation in the statement of the Lemma.
\end{proof}

\section{Essential and point spectrum}\label{completespec}

The reader may be wondering whether the argument of $a$ plays any role. It turns out that the problem of determining the point spectrum of $V_a$ is rotationally symmetric.

\begin{lemma}\label{thm:apositivo}
    Let $a \in \mathbb{C}\setminus\{0\}$ and consider $V_a$ acting on $H_\omega^2$. Then, $V_a \sim V_{ae^{i\theta}}$ for each $\theta \in \R$. In particular,
    \[
        \sigma(V_a) = \sigma(V_{ae^{i\theta}}) \quad \text{and} \quad \sigma_{\text{\normalfont p}}(V_a) =  \sigma_{\text{\normalfont p}}(V_{ae^{i\theta}}), \qquad \text{for all } \theta \in \mathbb{R}.
    \]
\end{lemma}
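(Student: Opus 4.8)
The plan is to exhibit an explicit unitary operator on $H^2_\omega$ that conjugates $V_a$ into $V_{ae^{i\theta}}$; since unitary equivalence preserves both the spectrum and the point spectrum, the two displayed identities follow at once. The natural candidate is the rotation operator $U_\theta \colon H^2_\omega \to H^2_\omega$ defined on the canonical basis by $U_\theta e_n = e^{in\theta} e_n$, equivalently $(U_\theta f)(z) = f(e^{i\theta} z)$. Because the norm $\|\cdot\|_\omega$ only sees the moduli of the Maclaurin coefficients, $U_\theta$ is a surjective isometry, hence unitary, with $U_\theta^{-1} = U_\theta^* = U_{-\theta}$.

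Next I would compute the conjugate of the multiplication operator. For $f \in H^2_\omega$ one has $(M_{a-z} U_\theta^* f)(z) = (a-z)\, f(e^{-i\theta} z)$, and applying $U_\theta$ replaces $z$ by $e^{i\theta} z$, giving $(U_\theta M_{a-z} U_\theta^* f)(z) = (a - e^{i\theta} z)\, f(z)$. Thus $U_\theta M_{a-z} U_\theta^* = M_{a - e^{i\theta} z}$. Writing $a - e^{i\theta} z = e^{i\theta}\big(a e^{-i\theta} - z\big)$, this equals $e^{i\theta}\, M_{a e^{-i\theta} - z}$, a unimodular scalar times $M_{a e^{-i\theta}-z}$.

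Taking adjoints and composing, and using that $U_\theta$ is unitary, we get
\[
U_\theta V_a U_\theta^* = \big(U_\theta M_{a-z} U_\theta^*\big)^* \big(U_\theta M_{a-z} U_\theta^*\big) = M_{a-e^{i\theta}z}^* M_{a-e^{i\theta}z} = |e^{i\theta}|^2\, M_{a e^{-i\theta}-z}^* M_{a e^{-i\theta}-z} = V_{a e^{-i\theta}},
\]
the unimodular factor cancelling in the product $M^*M$. Replacing $\theta$ by $-\theta$ gives $U_{-\theta} V_a U_{-\theta}^* = V_{a e^{i\theta}}$, so $V_a \sim V_{a e^{i\theta}}$ for every $\theta \in \R$, and the equalities $\sigma(V_a) = \sigma(V_{a e^{i\theta}})$ and $\sigma_{\mathrm{p}}(V_a) = \sigma_{\mathrm{p}}(V_{a e^{i\theta}})$ are then immediate from the unitary invariance of the spectrum and point spectrum.

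There is no genuine obstacle here; the only points requiring care are tracking the direction of the rotation (the $e^{i\theta}$ versus $e^{-i\theta}$ bookkeeping) and observing that the modulus-one scalar produced by pulling $e^{i\theta}$ out of $M_{a-e^{i\theta}z}$ disappears upon forming $M_f^*M_f$. As a sanity check, one may alternatively verify directly at the level of the recurrence of Lemma \ref{lemma-recurrencia} that $h_n \mapsto e^{-in\theta} h_n$ sends a solution for parameter $a e^{i\theta}$ to a solution for parameter $a$ at the same $\lambda$ while preserving membership in $H^2_\omega$; this reproves the point-spectrum statement, though not the full spectral identity.
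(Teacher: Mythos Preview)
Your proof is correct and follows essentially the same approach as the paper: both use the rotation unitary $f\mapsto f(e^{i\theta}z)$ on $H^2_\omega$ to conjugate $V_a$ into $V_{ae^{i\theta}}$. The only difference is cosmetic---the paper verifies $R_\theta^* V_a R_\theta = V_{ae^{i\theta}}$ by expanding in the orthonormal basis and computing coefficients directly, whereas you obtain it more cleanly from the intertwining relation $U_\theta M_{a-z} U_\theta^* = M_{a-e^{i\theta}z} = e^{i\theta} M_{ae^{-i\theta}-z}$ and the cancellation of the unimodular scalar in $M^*M$.
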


\begin{proof}
    Given $\theta \in \R$, consider the surjective linear operator $R_\theta \colon H_\omega^2 \to H_\omega^2$ given by $R_\theta f(z) = f(e^{i\theta}z)$. It is clear that $R_\theta$ is a unitary operator since it preserves the inner product:
    \[
        \langle R_{\theta} f, R_\theta g \rangle = \sum_{n = 0}^\infty f_ne^{in\theta} \overline{g_n e^{in\theta}} \omega_n = \sum_{n = 0}^\infty f_n\overline{g_n} \omega_n = \langle  f,  g \rangle.
    \]
    Let us check that $R_\theta$ is precisely the unitary transformation making the two operators similar. Consider any $h \in H^2_\omega$. Since $e_n(z) = z^n/\sqrt{\omega_n}$ form an orthonormal basis in $H_\omega^2$, we have
    \[
        \begin{aligned}
            R_\theta^*V_aR_\theta h(z) &  = \sum_{n = 0}^\infty \langle R_\theta^*M_{a-z}^*M_{a-z}R_\theta h, e_n \rangle e_n(z) \\
            & = \sum_{n = 0}^\infty \big\langle (a-z)h(e^{i\theta}z) ,(a-z)e^{in\theta}z^n\big\rangle \frac{z^n}{\omega_n}. 
        \end{aligned}
    \]

Just distributing the inner products, and using the same convention as before, yields 
   \[ \begin{aligned}
            R_\theta^*V_aR_\theta h(z) & = \sum_{n = 0}^\infty \Big(  e^{in\theta}h_n(|a|^2 \omega_n + \omega_{n+1}) - ae^{i(n+1)\theta}h_{n+1}\omega_{n+1} -\overline{a} e^{i(n-1)\theta}h_{n-1}\omega_n \Big)\frac{e^{-in\theta}z^n}{\omega_n} \\
		& = \sum_{n = 0}^\infty \Big( |a|^2 h_n\omega_n - ae^{i\theta}h_{n+1}\omega_{n+1} -\overline{a}e^{-i\theta} h_{n-1}\omega_n + h_n\omega_{n+1}\Big)\frac{z^n}{\omega_n}.
        \end{aligned}
    \]
    That amounts to saying
    \[
        R_\theta^*V_aR_\theta h(z) = V_{ae^{i\theta}}h(z).
    \]
    Therefore, $V_a = M_{a-z}^* M_{a-z}$ is unitarily equivalent to $ V_{ae^{i\theta}}$ and so they share the spectrum as well as all the spectral parts.
\end{proof}

The operator $V_a$ being self-adjoint guarantees that the spectrum will be real. In fact, we can at this point check that, in very large generality, $V_a$ has only positive point spectrum.

\begin{corollary}\label{coropositive}
Let $a \in \C \backslash \{0\}$, $\omega$ be a valid weight, and $V_a$ act on $H^2_\omega$. Then 
\[
    \sigma_{\text{\normalfont p}} (V_a) \subset (0, \infty).
\]
\end{corollary}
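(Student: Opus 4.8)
The operator $V_a=M_{a-z}^*M_{a-z}$ is positive semidefinite, so $\sigma_{\mathrm p}(V_a)\subset[0,\infty)$ automatically. Hence the only thing to rule out is that $0$ is an eigenvalue. Now $V_a h = 0$ forces $\langle V_a h,h\rangle = \|M_{a-z}h\|_\omega^2 = 0$, i.e. $(a-z)h(z)\equiv 0$ on $\D$. Since $h$ is holomorphic on $\D$ and $a-z$ vanishes at most at the single point $z=a$, the product $(a-z)h$ can only be identically zero if $h\equiv 0$. (If $a\notin\D$ this is immediate; if $a\in\D$, then $(a-z)h\equiv0$ still forces $h\equiv0$ away from $a$ and hence everywhere by the identity theorem.) Therefore $0\notin\sigma_{\mathrm p}(V_a)$, which combined with positivity gives $\sigma_{\mathrm p}(V_a)\subset(0,\infty)$.

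Alternatively, one can argue purely from the recurrence in Lemma \ref{lemma-recurrencia}: setting $\lambda=0$ there, the relation at $n=0$ reads $|a|^2h_0 + h_0\tfrac{\omega_1}{\omega_0} = a h_1\tfrac{\omega_1}{\omega_0}$, and each successive line expresses $h_{n+1}$ in terms of $h_n$ and $h_{n-1}$ (using $a\neq0$). If $h_0=0$ then inductively $h_n=0$ for all $n$, so $h\equiv0$; hence an eigenfunction must have $h_0\neq0$. But an eigenfunction for eigenvalue $0$ would satisfy $(a-z)h\equiv0$, forcing $h(0)=a\cdot h_0=0$ when $a$ could be arbitrary — more directly, $\|(a-z)h\|_\omega=0$ contradicts $h\neq0$. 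I would present the first, cleaner argument in the main text and perhaps mention the recurrence viewpoint in passing, since the recurrence machinery has just been set up.

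The only mild subtlety — and it is genuinely mild — is the case $a\in\D$, where $a-z$ does have a zero inside the disc: one must invoke the identity theorem to conclude that $(a-z)h\equiv0$ still implies $h\equiv0$, rather than merely $h$ vanishing on $\D\setminus\{a\}$. There is no real obstacle here; the statement follows in a few lines from positivity of $V_a$ together with injectivity of $M_{a-z}$ on $H^2_\omega$, the latter being clear because multiplication by a nonzero analytic function is injective on any space of analytic functions on $\D$.
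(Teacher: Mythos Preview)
Your main argument is correct and is considerably simpler than the paper's own proof. You exploit two soft facts: that $V_a=M_{a-z}^*M_{a-z}$ is positive semidefinite (so any eigenvalue is $\geq 0$), and that $M_{a-z}$ is injective on $H^2_\omega$ because a nonzero holomorphic function times $h$ can vanish identically only if $h\equiv 0$. This disposes of the corollary in a few lines.

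The paper instead argues directly from the recurrence of Lemma~\ref{lemma-recurrencia}. Assuming $a>0$ (via Lemma~\ref{thm:apositivo}) and $\lambda\le 0$, it normalizes $h_0=1$, introduces the auxiliary quantity $B_n=(ah_n-h_{n-1})\omega_n$, and shows inductively that $h_n,B_n>0$ for all $n$. It then deduces exponential growth of the coefficients when $a\neq 1$ (so $h\notin\mathrm{Hol}(\D)$), and divergence of $\|h\|_\omega$ when $a=1$. Either way $h\notin H^2_\omega$, so $\lambda$ cannot be an eigenvalue.

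What each approach buys: your route is cleaner, uses no properties of $\omega$ beyond the fact that $H^2_\omega$ consists of holomorphic functions on $\D$, and would work verbatim for $M_f^*M_f$ with any nonzero multiplier $f$. The paper's hands-on recurrence analysis is longer but rehearses the sign-tracking and growth estimates that reappear (in sharper form) in Lemma~\ref{lowupbounds}; in that sense it is a warm-up for the later quantitative bounds rather than the minimal proof of this particular corollary.

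One minor remark: your ``alternative'' paragraph becomes garbled near the end (the clause about ``$h(0)=a\cdot h_0=0$'' is not right, since $h(0)=h_0$), and in any case that paragraph ultimately falls back on $\|(a-z)h\|_\omega=0$, which is just your first argument again. I would drop it and present only the clean positivity-plus-injectivity proof.
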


\begin{proof}
$V_a$ is self-adjoint and thus $\sigma_{\text{\normalfont p}}(V_a) \subset \R$. Let us check that any eigenvalue $\lambda$ must be strictly positive. By Lemma \ref{thm:apositivo}, we can assume $a>0$. We thus fix $\omega$, $a>0$ and $\lambda \leq 0$, and suppose $h(z)= \sum h_n z^n$ satisfies the recurrence relation from Lemma \ref{lemma-recurrencia} with $h_0=1$. By induction, this implies all $h_n$ are real. Define \[B_n := (ah_n - h_{n-1}) \omega_{n},\] (with $B_0=a >0$). The recurrence relation becomes then 
\[B_{n+1} = aB_n - \lambda h_n \omega_n.\]
Since $\lambda \leq 0$, this implies, firstly, that $h_n$ and $B_n$ are positive for all $n\in \N$: indeed, this holds for $n=0$ and if it does for $n$ then \begin{equation}\label{eqn21}B_{n+1} > aB_n.\end{equation}
This implies that \begin{equation}\label{eqn22}ah_{n+1} \omega_{n+1} > aB_n + h_n \omega_{n+1}.\end{equation}
From \eqref{eqn21}, if $a>1$ we obtain that $B_n$ grows exponentially, but this means that at least one of $|h_n|, |h_{n-1}|$ is comparably large and thus, $h$ cannot be holomorphic in $\D$. On the other hand, from \eqref{eqn22} we can see that $h_{n+1} > \frac{h_n}{a}$, which also means exponential growth if $a<1$, and lack of holomorphicity.
Finally, we can assume $a=1$, and \eqref{eqn21} implies that $B_n$ is non-decreasing, which means that \[h_n \geq \frac{1}{\omega_n} + h_{n-1} \geq \frac{1}{\omega_n} +1.\] 

Therefore, \[\|h\|^2_{H^2_\omega} \geq \sum_{n=0}^\infty \frac{1}{\omega_n} + \sum_{n=0}^\infty \omega_n.\] At least one of these two last sums must be infinite and therefore $h$ cannot be in $H^2_\omega$.
\end{proof}

We already noticed that the point spectrum depended on the choice of norm. However, the rest of the spectrum does not depend on the choice of equivalent norm or even (to some extent) on the choice of space. This will be a consequence of the Blumenthal-Weyl criterion and the Lieb-Thirring bound, as described in Theorem 1 of \cite{killip-simon-annals}:

\begin{theorem}\label{thmKilSim}
Let $J$ denote the infinite Jacobi matrix given by sequence $J_{n,n+1}=J_{n+1,n}=a_n$ and $J_{n,n}=b_n$ for $n \geq 1$. If \[2\sum_{n=1}^\infty (a_n-1)^2 + \sum_{n=1}^\infty b_n^2 < \infty,\]
then the essential spectrum of $J$ is $[-2,2]$. The point spectrum is then contained in two sequences, $\lambda^+$ and $\lambda^-$, converging to $\pm 2$ from outside of the interval, with both $\lambda_j^+ -2$ and $\lambda^-_j +2$ belonging to $\ell^{3/2}$.
\end{theorem}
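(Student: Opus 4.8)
The plan is to recognize this as Theorem~1 of \cite{killip-simon-annals} and to reproduce the structure of its proof, which splits into an elementary half and a deep half. \emph{First}, I would dispatch the essential-spectrum claim: the hypothesis $2\sum(a_n-1)^2+\sum b_n^2<\infty$ forces $a_n\to 1$ and $b_n\to 0$, so $J-J_0$ is compact (in fact Hilbert--Schmidt), where $J_0$ is the free Jacobi matrix with $a_n\equiv 1$, $b_n\equiv 0$ and $\sigma(J_0)=\sigma_{ess}(J_0)=[-2,2]$. Weyl's theorem on the stability of the essential spectrum under compact perturbations then gives $\sigma_{ess}(J)=[-2,2]$, and everything in $\sigma(J)\setminus[-2,2]$ consists of isolated eigenvalues of finite multiplicity that accumulate only at $\pm 2$; enumerating those above $2$ (decreasingly) and those below $-2$ (increasingly) produces the two sequences $\lambda^\pm$ converging to $\pm 2$ from outside the band.

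\emph{Second}, for the quantitative claim $\{\lambda_j^+-2\},\{\lambda_j^-+2\}\in\ell^{3/2}$ I would run the sum-rule method. Introduce the $m$-function $m(z)=\langle\delta_1,(J-z)^{-1}\delta_1\rangle$ and its perturbation determinant $L(z)$; expanding $\log L$ at $z=\infty$ gives coefficients that are explicit polynomials in finitely many $a_n,b_n$, while a Poisson--Jensen/Szeg\H{o}-type representation of $\log L$ on $\overline{\C}\setminus[-2,2]$ gives a boundary integral over $[-2,2]$ against $\operatorname{Im}m(E+i0)$ plus a convergent sum over the eigenvalues $E_j$. Matching the two expansions at second order yields the \emph{$P_2$ sum rule}
\[
  Q(\mu)+\sum_j F(E_j)=\sum_{n\geq 1}\Big[\tfrac14 b_n^2+\tfrac12 G(a_n)\Big],
\]
where $Q(\mu)\geq 0$ is a quasi-Szeg\H{o} entropy of the spectral measure $\mu$ of $(J,\delta_1)$, $G(a)=a^2-1-\log a^2\geq 0$, and $F(E)\geq 0$ is the band-edge function $F(\beta+\beta^{-1})=\tfrac14(\beta^2-\beta^{-2})-\log|\beta|$ (branch $|\beta|>1$). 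Since $G(a)=2(a-1)^2+O((a-1)^3)$ near $a=1$ and the $a_n$ are bounded, the right-hand side is $\leq C\big(\sum(a_n-1)^2+\sum b_n^2\big)<\infty$; all three terms on the left being nonnegative, in particular $\sum_j F(E_j)<\infty$.

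\emph{Third}, I would convert this into the $\ell^{3/2}$ statement via the elementary asymptotics $F(E)=\tfrac23\big(|E|-2\big)^{3/2}\,(1+o(1))$ as $|E|\to 2^+$ (set $\beta=1+s$, so $E-2\sim s^2$ and $F\sim\tfrac23 s^3$); hence $\sum_j F(E_j)<\infty$ is equivalent to $\sum_j\big(|E_j|-2\big)^{3/2}<\infty$, which is exactly the Lieb--Thirring-type bound asserted for $\lambda^\pm$. The step I expect to be the genuine obstacle is \emph{establishing the sum rule rigorously}: the expansion/residue computation above is only formal, and in \cite{killip-simon-annals} one instead proves a ``step-by-step'' version (stripping one row and column from $J$), then passes to the limit using weak-$*$ convergence of the truncated spectral measures together with the semicontinuity of the relevant entropy functional $\mu\mapsto Q(\mu)$, which is precisely what yields the inequality $\leq$ we need. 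Controlling that limit is the technical heart; by contrast, the enumeration of eigenvalues, the compactness argument for $\sigma_{ess}$, and the edge asymptotics of $F$ are all routine.
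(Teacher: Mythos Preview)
The paper does not prove this statement at all: it is quoted verbatim (without proof) as Theorem~1 of \cite{killip-simon-annals} and then used as a black box in the proof of Theorem~\ref{superthm}. So there is no ``paper's own proof'' to compare against; your proposal goes well beyond what the paper does.

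That said, your outline of the Killip--Simon argument is accurate in its broad strokes: the essential-spectrum claim is indeed Weyl's theorem applied to the Hilbert--Schmidt perturbation $J-J_0$, the $\ell^{3/2}$ bound comes from the $P_2$ sum rule combined with the edge asymptotics $F(E)\sim\tfrac23(|E|-2)^{3/2}$, and you correctly flag the step-by-step (coefficient-stripping) argument with semicontinuity of the entropy as the genuine technical core. For the purposes of this paper, however, all of that is unnecessary---a one-line citation suffices.
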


We will make use of this in order to describe completely the spectrum of $V_a$, provided one can describe the pure point spectrum. This complements the computations to be done for some choices of weight $\omega_n = \binom{n-\alpha}{n}^{-1}$ when $\alpha <1$.

\begin{theorem}\label{superthm}
	Let $a \in \mathbb{C}\setminus\{0\}$ and $\omega$ be a valid sequence.
Consider the operator $V_{a} = M_{a-z}^*M_{a-z}$ acting on $H^2_\omega$. Then:
	\[
			\sigma(V_{a}) = \sigma_{\text{\normalfont{p}}}(V_{a}) \cup \big[(1-|a|)^2, (1+|a|)^2\big].
		\]

  Moreover, the point spectrum is contained in two positive sequences $\lambda^-$ converging to $(1-|a|)^2$ from below, and $\lambda^+$ converging to $(1+|a|)^2$ from above. Both sequences have differences with their limits in $\ell^{3/2}$.
\end{theorem}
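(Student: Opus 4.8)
The plan is to reduce the operator $V_a$ to an infinite Jacobi matrix and then apply Theorem \ref{thmKilSim}. By Lemma \ref{lemma-recurrencia}, the action of $V_a$ on the orthonormal basis $\{e_n\}$ is tridiagonal, so $V_a$ is a Jacobi operator. First I would compute explicitly its matrix entries with respect to $\{e_n\}$: the diagonal entries are $b_n = |a|^2 + \frac{\omega_{n+1}}{\omega_n}$ and the off-diagonal entries are $c_n = -a\sqrt{\frac{\omega_{n+1}}{\omega_n}}$ (up to the rotation of Lemma \ref{thm:apositivo}, which lets us assume $a>0$; in general one should track that $J$ has real entries after conjugating by a suitable diagonal unitary, or simply note that the spectrum is unchanged under $a \mapsto |a|$). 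This uses the computation already carried out inside the proof of Lemma \ref{lemma-recurrencia}.

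Next I would rescale so that the essential spectrum lands on $[-2,2]$. Writing $J = V_a - ((1-|a|)^2+(1+|a|)^2)/2\cdot I$, rescaled by the factor $2/((1+|a|)^2-(1-|a|)^2) = 1/(2|a|)$, gives a Jacobi matrix $\widetilde J$ with diagonal entries $\widetilde b_n = \frac{1}{2|a|}\left(\frac{\omega_{n+1}}{\omega_n} - 1\right)$ and off-diagonal entries $\widetilde a_n = \frac{1}{2|a|}\cdot |a|\sqrt{\frac{\omega_{n+1}}{\omega_n}} = \frac12\sqrt{\frac{\omega_{n+1}}{\omega_n}}$, so that $\widetilde a_n - 1 = \frac12\left(\sqrt{\frac{\omega_{n+1}}{\omega_n}} - 1\right) \cdot \frac{1}{1} $; more precisely $\sqrt{\frac{\omega_{n+1}}{\omega_n}} - 1$ and $\frac{\omega_{n+1}}{\omega_n} - 1$ are comparable (both tend to $0$ by the valid-weight hypothesis $\lim \omega_{n+1}/\omega_n = 1$), hence $\widetilde a_n - 1 = O\!\left(1 - \frac{\omega_{n+1}}{\omega_n}\right)$ and $\widetilde b_n = O\!\left(1 - \frac{\omega_{n+1}}{\omega_n}\right)$. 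Therefore the Killip--Simon / Lieb--Thirring summability hypothesis $2\sum(\widetilde a_n - 1)^2 + \sum \widetilde b_n^2 < \infty$ is exactly condition \eqref{eqnA1} defining a valid weight, up to a constant. Theorem \ref{thmKilSim} then yields $\sigma_{\text{ess}}(\widetilde J) = [-2,2]$, which unscales to $\sigma_{\text{ess}}(V_a) = [(1-|a|)^2, (1+|a|)^2]$, and the point spectrum of $\widetilde J$ sits in two sequences converging to $\pm 2$ from outside with $\ell^{3/2}$ differences; unscaling gives the two sequences $\lambda^\pm$ with the stated convergence and $\ell^{3/2}$ property. Finally, $\sigma(V_a) = \sigma_{\text{ess}}(V_a) \cup \sigma_{\text{p}}(V_a)$ because for a bounded self-adjoint operator the spectrum is the union of the essential spectrum and the isolated eigenvalues of finite multiplicity, and any point of $\sigma(V_a)$ outside $\sigma_{\text{ess}}(V_a)$ is such an isolated eigenvalue; combined with Corollary \ref{coropositive} (positivity of eigenvalues) and the fact that $(1-|a|)^2 \ge 0$, all the pieces fit, giving the displayed description.

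The main obstacle, I expect, is the bookkeeping around reducing to the real, $a>0$ case and making Theorem \ref{thmKilSim} literally applicable: Theorem \ref{thmKilSim} is stated for Jacobi matrices with $a_n = a_{n+1,n}$ real (indeed it implicitly assumes $a_n \ne 0$ and real), whereas $V_a$ for complex $a$ has complex off-diagonal entries $-a\sqrt{\omega_{n+1}/\omega_n}$. The clean fix is to invoke Lemma \ref{thm:apositivo} at the outset to replace $a$ by $|a|>0$, so that all entries are real and the off-diagonals $-|a|\sqrt{\omega_{n+1}/\omega_n}$ are nonzero; the sign is irrelevant since conjugating by $\operatorname{diag}((-1)^n)$ flips it. A secondary, purely technical, point is checking that $\sqrt{\omega_{n+1}/\omega_n} - 1$ is square-summable given \eqref{eqnA1}: since $\omega_{n+1}/\omega_n \to 1$, one has $\left|\sqrt{\omega_{n+1}/\omega_n} - 1\right| = \frac{|\omega_{n+1}/\omega_n - 1|}{\sqrt{\omega_{n+1}/\omega_n} + 1} \le |\omega_{n+1}/\omega_n - 1|$ for $n$ large, so square-summability is immediate from \eqref{eqnA1}; similarly $|\widetilde b_n| \le \frac{1}{2|a|}|\omega_{n+1}/\omega_n - 1|$. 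Everything else is routine rescaling of spectra, which I would state but not belabor.
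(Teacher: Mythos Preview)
Your approach is essentially identical to the paper's: reduce to $a>0$ via Lemma~\ref{thm:apositivo}, realize $V_a$ as a Jacobi matrix in the basis $\{e_n\}$, check the Hilbert--Schmidt hypothesis of Theorem~\ref{thmKilSim} using $\sqrt{x}-1=(x-1)/(\sqrt{x}+1)$ together with \eqref{eqnA1}, and then read off both the essential spectrum and the $\ell^{3/2}$ eigenvalue bounds. The one slip is arithmetic: the affine map taking $[(1-|a|)^2,(1+|a|)^2]$ onto $[-2,2]$ has scaling factor $1/|a|$, not $1/(2|a|)$ (the interval has length $4|a|$); with your factor one gets $\widetilde a_n\to 1/2$ and Theorem~\ref{thmKilSim} does not literally apply, whereas with the correct factor $\widetilde a_n=\sqrt{\omega_{n+1}/\omega_n}\to 1$ and your argument goes through exactly as written.
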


\begin{proof} As usual, we assume $a > 0$. Notice $V_a$ is self-adjoint and thus, its discrete spectrum is just its pure point spectrum. The positivity of the point spectrum was shown in Corollary \ref{coropositive}. Let us find the essential spectrum. By Theorem \ref{thmKilSim}, we just need to check that $V_a$, seen as an infinite matrix representing its action on normalized monomials,  is a Hilbert-Schmidt perturbation of $(a^2+1)I - aJ_0$, where $J_0$ is the free Jacobi matrix, given by taking $a_n=1$ and $b_n=0$ for all $n \geq 1$.
In fact, fixed a weight $\omega$ and a value $a >0$, the infinite matrix $J$ associated to the operator $V_a$ is a Jacobi matrix given by
\[a_n= -a \sqrt{\frac{\omega_{n}}{\omega_{n-1}}},\]
\[b_n = a^2 + \frac{\omega_{n}}{\omega_{n-1}},\]
for all $n \geq 1$.
Thus for $J - (a^2+1)I+aJ_0$ to be a Hilbert-Schmidt perturbation of 0, we need to check that
\[2a^2 \sum \left(1-\sqrt{\frac{\omega_{n}}{\omega_{n-1}}}\right)^2+ \sum \left(1-\frac{\omega_{n}}{\omega_{n-1}}\right)^2 < \infty.\]
Since $\lim_{n \rightarrow \infty} \omega_n/\omega_{n-1} = 1$, using $1-\sqrt{x}= \frac{1-x}{1+\sqrt{x}}$, one just needs to check \eqref{eqnA1} is met.
\end{proof}

We can actually provide bounds on the potential eigenvalue sequences mentioned in the previous Theorem, and in particular show that if $\omega$ is non-increasing, $\lambda^+$ is empty.

\begin{lemma}\label{lowupbounds}
Let $a>0$, $h$ be an eigenfunction for some $\lambda$ for $V_a$ acting on $H^2_\omega$, with $\omega$ valid. Then:
\begin{enumerate}
\item[(I)] If $\omega$ is non-decreasing, then \[\lambda \in [(a-1)^2 -C_0a, (a+1)^2 + (C_0-1)a].\]
\item[(II)] If $\omega$ is non-increasing, then \[\lambda \in [(a-1)^2 -(C_1-1), (a-1)^2).\]
\end{enumerate}

\end{lemma}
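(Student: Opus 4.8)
The plan is to exploit the recurrence relation from Lemma \ref{lemma-recurrencia} together with the fact that an eigenfunction $h=\sum h_n z^n$ must lie in $H^2_\omega$, i.e. its coefficients must decay; more precisely $h_n \to 0$ (since $\omega_0=1$ and $\omega$ valid forces $\sum|h_n|^2\omega_n<\infty$, and the relevant monotonicity of $\omega$ will let me extract what I need). Following Corollary \ref{coropositive} and Lemma \ref{thm:apositivo}, I may assume $a>0$ and that all $h_n$ are real; normalize $h_0=1$. Rewriting the recurrence with the notation $r_n := \omega_{n+1}/\omega_n$, the relation reads
\[
\bigl(a^2 + r_n - \lambda\bigr) h_n = a\,r_n\,h_{n+1} + a\,h_{n-1}, \qquad n\ge 0 .
\]
I will argue by contradiction: if $\lambda$ lies outside the stated interval, I will show the recurrence forces $|h_n|$ to grow (or fail to decay), contradicting $h\in H^2_\omega$.

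For part (I), $\omega$ non-decreasing means $r_n\ge 1$, and $C_0=\sup_n r_n$ while $C_1=1$. First suppose $\lambda < (a-1)^2 - C_0 a = a^2 - 2a + 1 - C_0 a$. Then for every $n$, $a^2 + r_n - \lambda \ge a^2 + 1 - \lambda > 2a + C_0 a \ge (r_n+1)a$, so the coefficient $a^2+r_n-\lambda$ on the left strictly exceeds $a r_n + a$. Starting from $h_0=1>0$ (and $h_{-1}=0$), induction gives $h_n>0$ and, comparing $\bigl(a^2+r_n-\lambda\bigr)h_n = a r_n h_{n+1} + a h_{n-1}$ with the bound $a r_n h_{n+1} + a h_{n-1} \le a r_n h_{n+1} + a h_n$ (using $h_{n-1}\le h_n$, which I also verify inductively from the same inequality), one deduces $h_{n+1}\ge \gamma\, h_n$ for a fixed $\gamma>1$ — exponential growth, contradiction. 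For the upper estimate $\lambda > (a+1)^2 + (C_0-1)a$ one instead shows the sign pattern forces $h_n$ to alternate in sign with $|h_n|$ growing: here $a^2+r_n-\lambda < a^2 + C_0 - \lambda < -2a - (C_0-1)a + C_0 \le \ldots$ should be negative and of large enough modulus that $|(a^2+r_n-\lambda)h_n| > a r_n|h_{n+1}| + a|h_{n-1}|$ cannot hold with decaying coefficients; again one sets up a two-term induction on $(|h_{n-1}|,|h_n|)$ to get a fixed growth factor.

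For part (II), $\omega$ non-increasing gives $r_n\le 1$, $C_1=\sup_n(1/r_n)\ge 1$, $C_0=1$. The upper bound $\lambda < (a-1)^2$ should follow by essentially the same positivity argument as above but now pushed all the way to the edge of the essential spectrum: if $\lambda \ge (a-1)^2$ one shows the positive solution cannot decay (this is consistent with $\lambda^+$ being empty, proved in the Theorem's discussion). The lower bound $\lambda \ge (a-1)^2 - (C_1-1)$ comes from the reverse inequality: if $\lambda < (a-1)^2 - (C_1-1)$, then $a^2 + r_n - \lambda > 2a - r_n + C_1 \ge 2a - r_n + 1/r_n \ge (r_n+1)a/\ldots$ — I would track the constants carefully so that $a^2+r_n-\lambda > a r_n + a$ holds uniformly in $n$, and then the same inductive exponential-growth argument as in part (I) applies.

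The main obstacle I anticipate is the bookkeeping in the two-term induction when $\lambda$ is above the upper edge: unlike the clean positive-solution case, one must control an alternating sequence and show the recurrence cannot be "balanced" by a square-summable (weighted) tail, which requires showing that the quantity $\bigl|h_{n+1}/h_n\bigr|$ is bounded below by a constant strictly exceeding $1$ uniformly in $n$ — this needs the $\liminf$/$\sup$ behaviour of $r_n$ and a careful choice of which auxiliary monotone quantity (analogous to the $B_n$ of Corollary \ref{coropositive}) to iterate. Once the right auxiliary sequence is identified, each case reduces to a one-line estimate of the form "coefficient on the diagonal beats the off-diagonal sum," and the contradiction with membership in $H^2_\omega$ is immediate from the resulting geometric growth.
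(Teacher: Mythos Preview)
Your approach for case~(I) and for the lower bound in case~(II) follows the same recurrence/exponential-growth strategy as the paper (which only details the upper bound in~(I) and waves at the lower bounds with ``the same principles''). However, your treatment of the \emph{upper} bound in case~(II) diverges from the paper and contains a gap.

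For the strict inequality $\lambda<(a-1)^2$ when $\omega$ is non-increasing, the paper does not use the recurrence at all. Instead it observes that the shift $M_z$ is a contraction on $H^2_\omega$ (since $\omega_{n+1}\le\omega_n$), whence $\|M_{a-z}\|\le a+1$ and $\|V_a\|\le(a+1)^2$. This already rules out any eigenvalue above $(a+1)^2$; combined with Theorem~\ref{superthm}, which places the point spectrum in the two sequences $\lambda^-$ below $(a-1)^2$ and $\lambda^+$ above $(a+1)^2$, the eigenvalue must lie in $\lambda^-$, giving $\lambda<(a-1)^2$.

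Your proposed alternative---``if $\lambda\ge(a-1)^2$ one shows the positive solution cannot decay''---does not go through as stated. For $\lambda$ in the interval $[(a-1)^2,(a+1)^2]$ the recurrence is in its oscillatory regime (think of the constant-weight case, where the solutions are trigonometric), and no sign-preserving or monotonicity induction of the type you sketch is available there. Excluding eigenvalues embedded in the essential spectrum is precisely what Theorem~\ref{superthm} (via Killip--Simon) provides, and you need to invoke it rather than attempt to reprove it from the three-term recurrence. Once you do, the remaining task---showing $\lambda^+$ is empty, i.e.\ no eigenvalue exceeds $(a+1)^2$---is handled in one line by the operator-norm bound above, which is simpler than setting up another alternating-sign induction.
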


\begin{proof}
We show the proof of the upper bounds for $\lambda$. The same principles as in case (I) give the lower bounds. From Lemma \ref{lemma-recurrencia} we have 
\[h_{n+1} = h_n \left(\frac{a^2+1-\lambda}{a} \frac{\omega_n}{\omega_{n+1}} + \frac{1}{a}(1 -\frac{\omega_n}{\omega_{n+1}})\right) - h_{n-1} \frac{\omega_n}{\omega_{n+1}}.  \]
We are going to show that $\{|h_n|\}$ is an exponentially growing sequence and thus $h$ cannot be in $Hol(\D)$. As a basis for induction, we can thus assume that $|h_n|>|h_{n-1}|$. In case (I), suppose \[\lambda = (a+1)^2 + (C_0-1+\varepsilon)a,\] with $\varepsilon >0$. Since $1 - \frac{\omega_n}{\omega_{n+1}} \geq 0$, we have
\[|h_{n+1}|> \left(|h_n| (C_0+1+\varepsilon) - |h_{n-1}|\right)\frac{\omega_n}{\omega_{n+1}}.\] By induction, this is larger than $|h_n|(1+\frac{\varepsilon}{C_0})$ and we are done.
In case (II), notice that \[\|(a-z)h\| \leq |a|\|h\| + \|zh\|,\] and the shift is a contraction. Thus $\|V_a\| \leq \|M_{a-z}\|^2 \leq (|a|+1)^2$. In the notation of Theorem \ref{superthm}, we then know that $\lambda$ must be part of $\lambda^-$. \end{proof}

For the particular case of $H^2$, the case (II) of Lemma \ref{lowupbounds} implies that the point spectrum is empty, recovering Hartman-Wintner's result. In the next section we solve the remaining questions on Bergman-type spaces.

\section{A complete description of point spectrum and eigenfunctions}\label{pointspecsect}

For the choice of norms we made for $A^2_\beta$ spaces, \eqref{eqnA1} holds because if $\omega_n = \binom{n-\alpha}{n}^{-1}$ then \[1-\frac{\omega_{n+1}}{\omega_{n}}= \frac{-\alpha}{n+1-\alpha}.\]

We recall some notations and introduce new ones that are stable during this Section. A number $a \in \C \setminus \{0\}$ is considered fixed, giving rise to a function $f$ with $f(z) = a-z$, for which we study the operator $V_a=M^*_f M_f$ acting on some space. Whenever we try to find whether certain complex number $\lambda$ belongs to the point spectrum of $V_a$ we will denote by $h$ a function that should satisfy the corresponding eigenvalue equation, with Taylor coefficients $\{h_n\}_{n \in \N}$ so that $h(z) = \sum_{n\in \N} h_n z^n$. In any expression regarding these coefficients, it is understood that $h_{-1}=0$. Once we fix a $\lambda \in \C$, we will denote by \[b= \frac{|a|^2+1-\lambda}{|a|}.\] In that case, $P$ will be the polynomial given by 
\[P(z) \coloneqq z^2-bz+1.\]
Obviously $P$ has 2 roots (counting multiplicity) that we denote by $c$ and $1/c$ and, without loss of generality, we assume that $|c| \geq 1$. A relevant property relating $b$ and $c$ is that
\begin{equation}\label{eqn1001}
c+1/c=b.\end{equation}
We will sometimes use it in the form
\begin{equation}\label{eqn1012}
c^2=bc-1.\end{equation}

We will start by recalling the disappointing conclusion with regards to our aim of identifying cyclic functions: there is nothing in the point spectrum in $H^2$. Now, the description of the point spectrum is not as trivial in other spaces. Let us deal with the classical Bergman-type spaces $A^2_\beta = D_\alpha$ for negative values of $\alpha=-1-\beta$. Recall that the weight in this context comes given by $\omega_n=\binom{n-\alpha}{n}^{-1}$.

\begin{theorem}\label{thm:dalphanegativo}
    Let $a \in \mathbb{C}\setminus\{0\}$, $\alpha < 0$ and $V_{a}$ acting on $A^2_{-1-\alpha}$.     \begin{enumerate}[label=\normalfont{(\roman*)}]
        \item If \(0 < |a| < 1\),
        \[
		\sigma_{\text{\normalfont p}}(V_{a}) = \Bigg\{\lambda_{j} \coloneqq |a|^2 + 1 + \frac{\alpha^2 - (2j +2 - \alpha)\varrho_{j}}{2(j+1)(j+1-\alpha)} \colon j \in \mathbb{N}\Bigg\},
	\]
	where \(\varrho_{j} \coloneqq \sqrt{\alpha^2 +4|a|^2(j+1)(j+1-\alpha)}\). Every eigenspace is one-dimensional and
    \[
        \ker(V_{a} - \lambda_{j}I) = \mathrm{span}\left\{\frac{\Big(z -\mfrac{\varrho_{j} + \alpha}{2\overline{a}(j+1)}\Big)^j}{\Big(1 - \mfrac{\varrho_{j} + \alpha}{2\overline{a}(j+1)}z\Big)^{j+2-\alpha}}\right\}.
    \]
        \item If \(|a| \geq 1\),  \(\sigma_{\text{\normalfont p}}(V_{a}) = \emptyset\).
    \end{enumerate}
\end{theorem}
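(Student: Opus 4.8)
The plan is to use the rotational symmetry (Lemma~\ref{thm:apositivo}) to reduce to $a>0$, convert the coefficient recurrence of Lemma~\ref{lemma-recurrencia} into an explicit first-order ODE for the eigenfunction, solve it, and then determine for which parameters the solution is a genuine element of $A^2_{-1-\alpha}$. Fix $a>0$ and plug in $\omega_n=\binom{n-\alpha}{n}^{-1}$, so that $\omega_{n+1}/\omega_n=\tfrac{n+1}{n+1-\alpha}$. Multiplying the recurrence in Lemma~\ref{lemma-recurrencia} by $(n+1-\alpha)z^n$ and summing over $n\ge0$, the power-series identities $\sum(n+1)h_{n+1}z^n=h'(z)$, $\sum nh_nz^n=zh'(z)$, $\sum h_{n-1}z^n=zh(z)$, etc., collapse the recurrence into the homogeneous linear ODE
\[
a\,P(z)\,h'(z)=\bigl(ab(1-\alpha)+\alpha-a(2-\alpha)z\bigr)\,h(z),
\]
with $P(z)=z^2-bz+1$ and $b=\tfrac{a^2+1-\lambda}{a}$, exactly as in the notation fixed before the statement. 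Extracting the coefficient of $z^n$ recovers the recurrence, so (for power series) the ODE and the recurrence are equivalent; since $P(0)=1\ne0$, the ODE has a one-dimensional space of solutions holomorphic near $0$, which immediately forces every eigenspace of $V_a$ to be one-dimensional. Moreover, since $\omega$ is non-increasing for $\alpha<0$, Corollary~\ref{coropositive} and Lemma~\ref{lowupbounds}(II) give $0<\lambda<(a-1)^2$, hence $b>2$; therefore $P$ has two real roots $c>1>1/c>0$, and I write $w\coloneqq 1/c\in(0,1)$. This a priori bound is what lets me avoid the degenerate situations in which $P$ would have roots on $\mathbb{T}$.

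\textbf{Solving the ODE and extracting the quadratic.} Integrating the ODE by partial fractions: if $A$ and $B=\alpha-2-A$ denote the residues of the right-hand side over $aP$ at $w$ and at $c$ respectively, then $h(z)=\mathrm{const}\cdot(z-w)^A(z-c)^{\alpha-2-A}$. For $h$ to be holomorphic on $\mathbb{D}$ — necessary for $h\in A^2_{-1-\alpha}$ — the exponent $A$ at the interior point $w$ must be a non-negative integer $j$ (otherwise $h$ has a branch point or pole inside the disc); the factor $(z-c)^{\alpha-2-j}$ is then harmless, being analytic and bounded on $\overline{\mathbb{D}}$, so conversely $h\in H^\infty\subset A^2_{-1-\alpha}$. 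Writing $A$ in terms of $c$ via the residue formula, using $c+1/c=b$ and $c=1/w$, the condition $A=j$ becomes the quadratic $(j+1)w^2-\tfrac{\alpha}{a}w-(j+1-\alpha)=0$, with roots $w_{\pm}=\tfrac{\alpha\pm\varrho_j}{2a(j+1)}$, where $\varrho_j=\sqrt{\alpha^2+4a^2(j+1)(j+1-\alpha)}$.

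\textbf{Admissibility and conclusion.} It remains to decide which roots satisfy $w\in(0,1)$. Since $\alpha<0$ one has $\varrho_j>|\alpha|$, whence $w_-<0$ is never admissible and $w_+>0$ always; squaring the inequality $\varrho_j<2a(j+1)+|\alpha|$ and simplifying shows $w_+<1$ if and only if $a<1$. Thus for $0<|a|<1$, for each $j\in\mathbb{N}$ the value $w=w_+$ is admissible; substituting $b=w_++1/w_+$ (computed from the quadratic) into $\lambda=a^2+1-ab$ yields precisely $\lambda_j$ after simplification, with eigenfunction $(z-w_+)^j(z-c)^{\alpha-2-j}$, which, using $c=1/w_+$, rewrites as the displayed quotient; conversely, by the argument above every eigenvalue must arise in this way, so $\sigma_{\mathrm{p}}(V_a)=\{\lambda_j:j\in\mathbb{N}\}$. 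For $|a|\ge1$ neither $w_-$ nor $w_+$ lies in $(0,1)$ for any $j$, so no eigenfunction can exist and $\sigma_{\mathrm{p}}(V_a)=\emptyset$. Finally, the general $a\in\mathbb{C}\setminus\{0\}$ is reduced to $a>0$ by the unitary equivalence of Lemma~\ref{thm:apositivo}, which transports eigenfunctions through the rotation $R_\theta$ and puts $|a|$ (and $\overline{a}$) in place of $a$ in the formulas.

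\textbf{Main obstacle.} I expect the delicate points to be twofold: first, carrying out the generating-function computation so that the ODE — and in particular the constant terms produced by the $(1-\alpha)$-shifts — comes out exactly right; and second, the admissibility analysis, namely invoking $b>2$ to guarantee that $P$ genuinely has an interior root (so that "$A$ is a non-negative integer" is the whole of the holomorphy constraint), and then the elementary but sign-sensitive manipulation that pins down $w_+<1\iff |a|<1$, which is precisely what produces the dichotomy between cases (i) and (ii).
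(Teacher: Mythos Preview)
Your proof is correct and follows the same overall strategy as the paper's: reduce to $a>0$, convert the recurrence into a first-order linear ODE for $h$, solve it, and determine for which $\lambda$ the solution lies in the space. The genuine difference is that you invoke Lemma~\ref{lowupbounds}(II) (together with Corollary~\ref{coropositive}) at the outset to force $0<\lambda<(a-1)^2$, hence $b>2$; this guarantees a priori that $P$ has two distinct real roots $c>1>w=1/c>0$, and thereby eliminates the two degenerate cases the paper treats by hand (roots of $P$ on $\mathbb{T}$ with $c\ne\pm1$, and the double-root case $c=\pm1$, the latter requiring an explicit computation with exponential singularities and Forelli--Rudin estimates). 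Your reduction to the quadratic $(j+1)w^2-\tfrac{\alpha}{a}w-(j+1-\alpha)=0$ is also a cleaner route to the eigenvalue formula than the paper's derivation, which passes through a squaring step and a quadratic in $ab$. In exchange, the paper's case analysis is self-contained (it does not rely on Lemma~\ref{lowupbounds}) and is reused verbatim for Theorem~\ref{thm:dalpha01}, where $\omega$ is non-decreasing and your shortcut through Lemma~\ref{lowupbounds}(II) would no longer be available.
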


Before we proceed with the proof of the Theorem, let us remark that this result points to the possible existence of a necessary condition for a (non-constant) function to be cyclic, that the point spectrum of the operator be empty.  However, we will later see that such condition is not necessary in other cases.

\begin{proof}
    Without loss of generality, using Lemma \ref{thm:apositivo}, we may assume that $a> 0$. Our plan is to first infer a differential equation that must be satisfied by any eigenfunction, then solve said differential equation to find all possible eigenfunctions $h$, and then determine which $h$ among those actually belong to $H^2_\omega:=A^2_{-1-\alpha}$ and are therefore actual eigenfunctions. As a result we will also obtain the corresponding eigenvalues. 
 
 Let us now start by choosing an arbitrary $\lambda \in \mathbb{C}$ and consider $h(z) = \sum_{n \geq 0} h_n z^n$ in $H^2_\omega$. Using that $\omega_n = \binom{n-\alpha}{n}^{-1}$ and operating in the recurrence relation obtained in Lemma \ref{lemma-recurrencia}, we arrive to
	\[
		h_{n+1} -  \frac{h_n}{a} = \frac{n+1 -\alpha}{n+1} \bigg(h_n\bigg(a -\frac{\lambda}{a}\bigg) - h_{n-1}\bigg).
	\]
We isolate the terms that are multiplied by $\alpha$, yielding
	\[
		(n+1)\cdot (h_{n+1} -  bh_n + h_{n-1}) = -\alpha \bigg(h_n\bigg(a -\frac{\lambda}{a}\bigg) - h_{n-1}\bigg).
	\]
Multiplying both sides by $z^n$ and summing over all $n \geq 0$, we may transform the recurrence relation into an equation on holomorphic functions:
	\begin{equation}\label{eqnD0}
		\sum_{n=0}^{\infty} z^n (n+1)\cdot (h_{n+1} -  bh_n + h_{n-1}) = -\alpha \sum_{n=0}^{\infty}z^n \bigg(h_n\bigg(a -\frac{\lambda}{a}\bigg) - h_{n-1}\bigg).
	\end{equation}
The right-hand side above is $\alpha (z+1/a -b) h(z)$. The left-hand side can be identified as $h' -b (zh)' + (z^2h)'$. Therefore, separating the terms on $h$ from those on $h'$, we have a first-order differential equation, 
	\[
		h'(z)P(z) = h(z)\bigg(b(1-\alpha) + \frac{\alpha}{a} + (\alpha - 2)z\bigg).
	\]
 
We are going to solve this equation by solving for $h'/h$ for which we express the resulting rational function in simple fractions: denote by $c$ and $1/c$ be the complex roots of $P$ and assume, for a moment, that $c \neq 1/c$ (i.e. that $c \neq \pm 1$), and $|c| \geq 1$. Thus, 
	\[
		\frac{h'(z)}{h(z)} = \frac{\mu}{z-c} + \frac{\nu}{z-\tfrac{1}{c}}
	\]
	where $\mu, \nu \in \mathbb{C}$ are given by the system
 \begin{equation}\label{eqn1002}
 \begin{pmatrix} 1  &  1 \\ -1/c & -c \end{pmatrix} \begin{pmatrix} \mu \\ \nu \end{pmatrix} = \begin{pmatrix} \alpha-2 \\ \frac{\alpha}{a} + b(1-\alpha) \end{pmatrix}.\end{equation}
This can be solved using basic linear algebra to yield
 \begin{equation}\label{AB-dalpha}
		\begin{cases}
			\mu & = -1 - \mfrac{\alpha}{a} \mfrac{a-c}{c^2 - 1} = -1 - \mfrac{\alpha}{a} \mfrac{a-c}{bc - 2},\\[1em]
			\nu & = \alpha - 1 +\mfrac{\alpha}{a} \mfrac{a-c}{c^2 - 1} = \alpha - 1 +\mfrac{\alpha}{a} \mfrac{a-c}{bc - 2}.
		\end{cases}
	\end{equation}
At this point, we can find a general solution to our differential equation, and this is
	\begin{equation}\label{h-dalpha}
		h(z) = k \left(1-\frac{z}{c}\right)^\mu\left(1- cz\right)^\nu, \quad \text{for } k \in \mathbb{C}.
	\end{equation}
	Consequently, for $\lambda \in \mathbb{C}$ to be an eigenvalue of $V_{a} \colon H^2_\omega \to H^2_\omega$ it is necessary and sufficient that the function $h(z)$ computed in \eqref{h-dalpha} belongs to $H^2_\omega$. Depending on the value of $c$ there are three cases:

\medskip
    \begin{itemize}
		\item\textbf{Case 1:} $|c| = 1$, $c \neq \pm 1$.
	\end{itemize}

\smallskip

A standard fact about reproducing kernel Hilbert spaces gives a maximum speed of growth towards the boundary for functions in the space: Applying Cauchy-Schwarz and the reproducing property, if $h \in H^2_\omega$ we must have \[|h(z)| \leq \|h\| \sqrt{\kappa_z(z)} \approx (1-|z|)^{\frac{\alpha-1}{2}}.\] This limit is surpassed by $h$ here. Indeed, by the first equation in the system \eqref{eqn1002} we cannot have both $\Re(\mu), \Re(\nu) > \tfrac{\alpha - 1}{2}$. That means that one of the singularities (the one at the point $c$ or that at $1/c$) must be too strong for $h$ to belong to $H^2_\omega$. Thus, we conclude that this case does not produce any eigenvalue.

\medskip
    \begin{itemize}
		\item\textbf{Case 2:} $|c|>1$.
	\end{itemize}

\smallskip

In this case, we must have $\nu \in \mathbb{N}$: otherwise, $h$ is not even holomorphic at the point $1/c \in \D$. Fix now $j \in \N$ and assume that $\nu = j$. Then \eqref{AB-dalpha} is rewritten as
	\[
		j +1 - \alpha = \frac{\alpha}{a} \frac{a-c}{bc - 2}.
	\]
This equation already contains much of the essential information about $\lambda$ but we need to disentangle $\lambda$ from the remaining terms. We want to solve for $\lambda$ in terms of $a$, $j$, and $\alpha$.
 
Hence, we continue by grouping the terms on $c$,
	\[
		c\bigg(\frac{j+1-\alpha}{\alpha}ab+1\bigg) = a\bigg(\frac{2(j+1)}{\alpha} - 1\bigg).
	\]
	Now, observe that \(\tfrac{j+1-\alpha}{\alpha}ab+1\neq 0\): otherwise, the right-hand side is also null and thus, $\tfrac{\alpha}{2} = j+1$ which cannot hold because $\alpha < 0$. Therefore, we can solve for $c$, yielding
	\begin{equation}\label{c-formula1-dalpha}
		c = a\frac{\tfrac{2(j+1)}{\alpha} - 1}{\tfrac{j+1-\alpha}{\alpha}ab+1}.
	\end{equation}
	Reminding that $c = (b\pm\sqrt{b^2 -4})/2$, one can equal this expression with \eqref{c-formula1-dalpha}, subtract $b/2$ and take squares on both sides to obtain 
	\[
		b^2- 4 = \left(2a\frac{\tfrac{2(j+1)}{\alpha} - 1}{\tfrac{j+1-\alpha}{\alpha}ab+1} -b\right)^2.
	\]
Opening the parentheses, cancelling the two adding terms on $b^2$, and multiplying both sides by the adequate quantity, this is equivalent to
	\[
		\bigg(\frac{j+1-\alpha}{\alpha}ab +1\bigg)^2 = ab\bigg(\frac{2(j+1)}{\alpha} - 1\bigg)\bigg(\frac{j+1-\alpha}{\alpha}ab +1\bigg) - a^2\bigg(\frac{2(j+1)}{\alpha} - 1\bigg)^2.
	\]
    Now, since we are interested in isolating $\lambda$ and $ab = a^2 + 1 -\lambda$, we rewrite our later expression gathering each summand in terms of powers of \(ab\):
    \[
    a^2b^2\frac{(\alpha -(j+1))(j+1)}{\alpha^2} - ab + 1 + a^2\bigg(\frac{2(j+1)}{\alpha} - 1\bigg)^2 = 0.
    \]
    Solving this quadratic equation and using $ab = a^2 + 1 -\lambda$, one has two possible solutions
    \[
        a^2 + 1 -\lambda = \alpha^2\cdot\frac{1 \pm \sqrt{1 - 4\left(1 + a^2\Big(\mfrac{2(j+1)}{\alpha} - 1\Big)^2\right)\mfrac{(\alpha -(j+1))(j+1)}{\alpha^2}}}{2(\alpha -(j+1))(j+1)}.
    \] We can then solve for $\lambda$. It may seem like a miracle but, after inserting the $\alpha^2$ factor inside the square root, we obtain a degree 4 polynomial on $\alpha$ that has a double root at $\alpha = 2j+2$. To avoid too much of a lengthy expression we denote \[\varrho_{j} = \sqrt{\alpha^2 +4a^2(j+1)(j+1-\alpha)}.\] Observe that \(\varrho_j > |\alpha|\). Doing so, one obtains the next two candidates for the eigenvalues
    \begin{equation}\label{eigenvaluescandidates-dalpha}
        \lambda= a^2 + 1 + \frac{\alpha^2 \pm (2j +2 - \alpha)\varrho_{j}}{2(j+1)(j+1-\alpha)}.
    \end{equation}

    Notice the negative value of the square root will give the actual eigenvalue stated in the Theorem for $\lambda_j$. On the other hand, we need to discard the value with the positive root, in order to exclude those false candidates which are not actual solutions of the eigenvalue equation. When we took squares of $\sqrt{b^2-4}$, at some earlier point, the roles of $c$ and $1/c$ were intertwined. We need to identify the case where $c$ has a correct value with $|c|>1$. Since \(ab = a^2 + 1 -\lambda\), one has
    \[
        ab =  \frac{-\alpha^2 \mp (2j+2-\alpha)\varrho_{j}}{2(j+1)(j+1 - \alpha)},
    \]
    which allows us to rewrite \(\tfrac{(j+1-\alpha)}{\alpha}ab + 1\) as
    \[
        \frac{(j+1-\alpha)}{\alpha}ab + 1 = \bigg(1\mp\frac{\varrho_{j}}{\alpha}\bigg)\bigg(1- \frac{\alpha}{2(j+1)}\bigg).
    \]
    Now, substituting this expression into \eqref{c-formula1-dalpha} gives
    \begin{equation}\label{eq:cambio-demo-dalpha}
        c = \frac{a\Big(\mfrac{2(j+1)}{\alpha} - 1\Big)}{\Big(1\mp\mfrac{\varrho_{j}}{\alpha}\Big)\Big(1- \mfrac{\alpha}{2(j+1)}\Big)}  = \frac{2a(j+1)}{\alpha \mp \varrho_{j}}.
    \end{equation}
    Hence, in order to have $|c|>1$ (and then, lie in \textbf{Case 2}), we need \(2a(j+1) > |\alpha \mp \varrho_{j}|\):

\noindent\(\bullet\) The choice of the positive root \(+\varrho_{j}\) for the eigenvalue in \eqref{eigenvaluescandidates-dalpha} gives
        \[
            c = \frac{2a(j+1)}{\alpha - \varrho_{j}}.
        \]
        Since \(|\alpha - \varrho_{j}| =  \varrho_{j} - \alpha\), we need to solve the inequality \(2a(j+1)>\varrho_{j} - \alpha\). This inequality can be simplified to \(4a\alpha(j+1) > -4a^2\alpha(j+1)\), which can never happen since  \(\alpha < 0\) and so this case must be discarded.

\noindent\(\bullet\) The choice of the negative root \(-\varrho_{j}\) for the eigenvalue in \eqref{eigenvaluescandidates-dalpha} gives
        \[
            c = \frac{2a(j+1)}{\alpha + \varrho_{j}}.
        \]
        Since \(|\alpha + \varrho_{j}| =  \varrho_{j} + \alpha\), we need to solve the inequality \(2a(j+1)>\varrho_{j} + \alpha\). This inequality is equivalent to \(-4a\alpha(j+1) > -4a^2\alpha(j+1)\) which holds if and only if \(0 < a < 1\) and so the eigenvalue description in the statement holds with the corresponding eigenfunctions. In this case, $h$ is a holomorphic function with a polynomial type root at $1/c$ and analytic up to the disc of radius $|c|$. Any function that is holomorphic in a disc of radius larger than 1 is automatically in any $H^2_\omega$ space and thus $h$ is an actual eigenfunction whenever $0 < a < 1$.

\newpage

    \begin{itemize}
		\item\textbf{Case 3:} $c = \pm 1$.
	\end{itemize}
 
 \smallskip
 
To complete the proof, let us finally consider the case that \(c = \pm 1\) (which implies $b = \pm 2$ and $\lambda = (a \mp 1)^2$). In this case, the differential equation becomes
    \[
        \frac{h'(z)}{h(z)} = \mp\frac{\alpha-2}{1 \mp z } + \frac{\alpha\big(\tfrac{1}{a} \mp 1
        \big)}{(1\mp z)^2},
    \]
    whose general solution is
    \begin{equation}\label{hz-c1}
        h(z) = k (1\mp z)^{\alpha - 2}\exp\bigg(-\frac{\alpha\big(\tfrac{1}{a} \mp 1\big)}{z\mp 1}\bigg), \quad \text{for } k \in \mathbb{C}.
    \end{equation}
This is a holomorphic function in $\D$. Thus, the points \(\lambda = (a \mp 1)^2\) will be eigenvalues of \(V_{a} \colon H^2_\omega \to H^2_\omega\) whenever the corresponding function \(h(z)\) belongs to \(H^2_\omega\). Bearing in mind that
    \[
        \begin{aligned}
            \|h\|^2_{\omega} & \sim \int_{\D}|h(z)|^2 (1-|z|)^{-\alpha - 1} \, \mathrm{d}A(z)\\& \sim \int_{\D} \frac{1}{|z \mp 1|^{-2\alpha + 4}} \exp\left(-2\alpha\bigg(\frac{1}{a} \mp 1\bigg)\Re\bigg(\frac{1}{z\mp 1}\bigg)\right)  (1-|z|^2)^{-\alpha - 1} \, \mathrm{d}A(z),
        \end{aligned}
    \]
    when \(a \geq 1\), the exponential within the integral is bounded from below. This tells us that $h \not\in H^2_\omega$, upon applying standard Forelli-Rudin estimates (see, for instance, \cite[Thm. 1.7]{HKZ-bergmanspaces}). Otherwise, when \(0 < a < 1\) we have \(-2\alpha\big(\tfrac{1}{a}\mp 1\big) > 0\). Thus, the choice \(c = -1\) must also be excluded since
    \[
        -2\alpha\bigg(\frac{1}{a} + 1\bigg) \Re\bigg(\frac{1}{z+1}\bigg) > -\alpha \bigg(\frac{1}{a} + 1\bigg),
    \]
    and so the latter argument can be applied again to see that $h \not\in D_\alpha$. On the other hand, for \(c = 1\) using, for instance the change of variables
    \[
        \begin{cases}
            \Re(z) & = \bigg(1 - \dfrac{1}{2\gamma}\bigg) + \dfrac{1}{2\gamma}\cos(\varphi),\\
            \Im(z) & = \dfrac{1}{2\gamma}\sin(\varphi),
        \end{cases}\qquad \text{with } \,\gamma > \frac{1}{2} \,\text{ and }\, 0<\varphi < 2\pi,
    \]
    one has that \(\mathrm{d}A(z) = \mfrac{1}{4\pi\gamma^3} (1-\cos(\varphi))\, \mathrm{d}\varphi \mathrm{d}\gamma\) and so, applying the identities
    \[
        \Re\bigg(\frac{1}{z-1}\bigg) = -\gamma, \qquad \frac{1 - |z|^2}{|1 - z|^2} = 2\gamma -1 \quad \text{and} \quad \frac{1}{|1 - z|^2} = \frac{2\gamma^2}{1 - \cos(\varphi)},
    \]
    this yields
    \[
        \|h\|^2_{\omega} \gtrsim \int_1^T \int_\theta^{2\pi -\theta} \gamma^3(2\gamma - 1)^{-\alpha - 1} e^{2\alpha \gamma\big(\tfrac{1}{a} - 1\big)} \frac{1}{(1 - \cos(\varphi))^2} \, \mathrm{d}\varphi \mathrm{d}\gamma
    \]
    for each \(T \gg  1\) and \(\theta > 0\) close to zero. The right-hand integral is unbounded as \(\theta \to 0^+\) and consequently \(h \not\in H^2_\omega\).
\end{proof}

\begin{remark}\label{reDir}
Notice that, in \eqref{eqnD0}, the term $h_{n+1}(n+1)$ for $n=-1$ is null. We will come back to this critical point later, when discussing the Dirichlet space, since it is the fact that made the differential equation into a homogeneous one and this is the point where our argument will fail to some extent for other weights. \end{remark}

Let us now discuss the range of values $\alpha \in (0,1)$ for which our weights $\binom{n-\alpha}{n}^{-1}$ define a Dirichlet-type space.

\begin{theorem}\label{thm:dalpha01}
	Let $a \in \mathbb{C}\setminus\{0\}$, $\alpha \in (0,1)$ and $V_{a} = M_{a-z}^*M_{a-z}$ acting on $H^2_\omega$ where $\omega_n = \binom{n-\alpha}{n}^{-1}$. Then:
    \begin{enumerate}[label=\normalfont{(\roman*)}]
        \item If \(0 < |a| \leq 1\),
        \[
		\sigma_{\text{\normalfont p}}(V_{a}) = \Bigg\{\lambda_{j}^+ \coloneqq |a|^2 + 1 + \frac{\alpha^2 + (2j +2 - \alpha)\varrho_{j}}{2(j+1)(j+1-\alpha)} \colon j \in \mathbb{N}\Bigg\},
	\]
	where \(\varrho_{j} \coloneqq \sqrt{\alpha^2 +4|a|^2(j+1)(j+1-\alpha)}\). Eigenspaces are one-dimensional and
    \[
        \ker(V_{a} - \lambda_{j}^+I) = \mathrm{span}\left\{\frac{\Big(z +\mfrac{\varrho_{j} - \alpha}{2\overline{a}(j+1)}\Big)^j}{\Big(1+\mfrac{\varrho_{j} - \alpha}{2\overline{a}(j+1)}z\Big)^{j+2-\alpha}}\right\}.
    \]
        \item If \(|a| > 1\), 
        \[
		\sigma_{\text{\normalfont p}}(V_{a}) = \Bigg\{\lambda_{j}^\pm \coloneqq |a|^2 + 1 + \frac{\alpha^2 \pm (2j +2 - \alpha)\varrho_{j}}{2(j+1)(j+1-\alpha)} : j \in \mathbb{N}\Bigg\}.
	\]
    Moreover, all the eigenspaces are one-dimensional with
    \[
        \ker(V_{a} - \lambda_{j}^\pm I) = \mathrm{span}\left\{\frac{\Big(z\pm\mfrac{\varrho_{j} \mp \alpha}{2\overline{a}(j+1)}\Big)^j}{\Big(1 \pm\mfrac{\varrho_{j} \mp \alpha}{2\overline{a}(j+1)}z\Big)^{j+2-\alpha}}\right\}.
    \]
    \end{enumerate}
\end{theorem}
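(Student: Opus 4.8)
The plan is to reuse, almost verbatim, the three-step scheme behind the proof of Theorem~\ref{thm:dalphanegativo}: reduce to $a>0$ via Lemma~\ref{thm:apositivo}; turn the recurrence of Lemma~\ref{lemma-recurrencia} (with $\omega_n=\binom{n-\alpha}{n}^{-1}$) into a first-order linear ODE for the eigenfunction; solve that ODE; and then decide which solutions belong to $H^2_\omega=D_\alpha$. The passage to the ODE only used the identity $1-\omega_{n+1}/\omega_n=-\alpha/(n+1-\alpha)$, which is unchanged for $\alpha\in(0,1)$, so I would obtain again that for every $\lambda\in\C$ the eigenvalue equation is equivalent to
\[
h'(z)\,P(z)=h(z)\Bigl(b(1-\alpha)+\tfrac{\alpha}{a}+(\alpha-2)z\Bigr),
\]
with general solution $h(z)=k(1-z/c)^\mu(1-cz)^\nu$, $\mu+\nu=\alpha-2$, computed as in \eqref{eqn1002}--\eqref{AB-dalpha} when $c\neq\pm1$, and the confluent form \eqref{hz-c1} when $c=\pm1$. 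Thus the whole problem again reduces to the membership question, organized by the same trichotomy on $|c|$.

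\textbf{The cases $|c|=1$ with $c\neq\pm1$ and $c=\pm1$.} For the first I expect nothing new: the reproducing-kernel bound $|h(z)|\le\|h\|\sqrt{\kappa_z(z)}\asymp(1-|z|)^{(\alpha-1)/2}$ contradicts $\Re\mu+\Re\nu=\alpha-2<\alpha-1$, so one boundary singularity is too strong and no eigenvalue arises. For $c=\pm1$ one has $\lambda=(a\mp1)^2$, precisely the two endpoints of $[(1-|a|)^2,(1+|a|)^2]$; the quickest route is to invoke Theorem~\ref{superthm}, by which $\sigma_{\text{\normalfont p}}(V_a)$ lies strictly outside that interval, so these values are excluded at once. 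A self-contained alternative mimics Theorem~\ref{thm:dalphanegativo}, but now $D_\alpha$ with $\alpha\in(0,1)$ is \emph{not} a weighted Bergman space, so boundedness of $h$ no longer implies $h\in D_\alpha$: here I would pass to the equivalent norm $\|h\|_{D_\alpha}^2\asymp|h(0)|^2+\|h'\|_{A^2_{1-\alpha}}^2$ (legitimate since $1-\alpha>-1$) and apply Forelli--Rudin estimates \cite[Thm.~1.7]{HKZ-bergmanspaces} to $h'$, which near $z=\pm1$ behaves like $|1\mp z|^{\alpha-4}$ times the exponential factor; the integral then diverges in every subcase, either because $h$ already fails to be bounded or because of its tangential behaviour near the singular point.

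\textbf{The case $|c|>1$: the heart of the argument.} Holomorphy of $h$ at $1/c\in\D$ forces $\nu=j\in\N$; substituting $\nu=j$ into \eqref{AB-dalpha} and carrying out the identical chain of reductions (clearing denominators, squaring $\sqrt{b^2-4}$, collecting powers of $ab=a^2+1-\lambda$, factoring the quartic in $\alpha$) produces the same pair of candidates
\[
\lambda^{\pm}_j=a^2+1+\frac{\alpha^2\pm(2j+2-\alpha)\varrho_j}{2(j+1)(j+1-\alpha)},
\]
with $\varrho_j=\sqrt{\alpha^2+4a^2(j+1)(j+1-\alpha)}>\alpha$, where $\lambda^+_j$ comes with $c=2a(j+1)/(\alpha-\varrho_j)$ and $\lambda^-_j$ with $c=2a(j+1)/(\alpha+\varrho_j)$. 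The single place where the new range of $\alpha$ matters — and where the conclusion genuinely departs from Theorem~\ref{thm:dalphanegativo} — is the consistency check $|c|>1$. For $\lambda^+_j$ this reads $2a(j+1)>\varrho_j-\alpha$, which after squaring reduces to $4a\alpha(j+1)(1+a)>0$, hence holds for \emph{every} $a>0$ because $\alpha>0$; so $\lambda^+_j$ is always an eigenvalue. For $\lambda^-_j$ the inequality is $2a(j+1)>\varrho_j+\alpha$, which squares to $4a\alpha(j+1)(a-1)>0$, i.e. holds precisely when $a>1$; the borderline $|c|=1$ occurs only at $a=1$, where the $\lambda^-_j$ candidate degenerates to $\lambda=0$ and is killed by Corollary~\ref{coropositive}. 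Whenever a candidate survives, $h=k(1-z/c)^{\alpha-2-j}(1-cz)^j$ is holomorphic on a disc of radius $|c|>1$, hence lies in $D_\alpha$ automatically, so it is a true eigenvalue; one-dimensionality of the eigenspaces is immediate from Lemma~\ref{lemma-recurrencia}, and the explicit eigenfunctions follow by rewriting $h$ with the computed $c$ and undoing the rotation of Lemma~\ref{thm:apositivo} for general $a$. Assembling the three cases yields exactly the stated description: $\sigma_{\text{\normalfont p}}(V_a)=\{\lambda^+_j:j\in\N\}$ when $0<|a|\le1$, and $\{\lambda^+_j:j\in\N\}\cup\{\lambda^-_j:j\in\N\}$ when $|a|>1$.

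The only genuinely new obstacle relative to the $\alpha<0$ case is the treatment of $c=\pm1$, because $D_\alpha$ ($\alpha\in(0,1)$) is smaller than $H^2$ and the ``holomorphic near $\overline{\D}$ $\Rightarrow$ in the space'' shortcut is unavailable there; everything else is a careful sign-tracking re-run of the earlier computation, the crucial point being which of $\lambda^\pm_j$ passes the test $|c|>1$ in each regime of $|a|$.
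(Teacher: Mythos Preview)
Your proposal is correct and follows essentially the same route as the paper, which simply writes ``the proof follows the same steps as Theorem~\ref{thm:dalphanegativo} until expression~\eqref{eq:cambio-demo-dalpha}'' and then redoes the sign analysis of the inequality $|c|>1$ for $\alpha\in(0,1)$, arriving at exactly your conclusions on $\lambda_j^{\pm}$. Your only departure is a cleaner handling of $c=\pm1$ via Theorem~\ref{superthm} (the endpoints of the essential spectrum cannot be eigenvalues), which is a minor but welcome improvement over the paper's tacit reuse of the Bergman-integral argument from Case~3 of Theorem~\ref{thm:dalphanegativo}.
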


\begin{proof}
    Assuming \(a > 0\), the proof follows the same steps than for Theorem \ref{thm:dalphanegativo} until the expression \eqref{eq:cambio-demo-dalpha}. Once again, in order to have $|c|>1$ (and then, lie in \textbf{Case 2}), we need \(2a(j+1) > |\alpha \mp \varrho_{j}|\):
    \begin{itemize}
        \item The positive root \(+\varrho_{j}\) for the candidate of eigenvalue in \eqref{eigenvaluescandidates-dalpha} corresponds to
        \[
            c = \frac{2a(j+1)}{\alpha - \varrho_{j}}.
        \]
        Since \(\varrho_j > \alpha\) still holds for \(\alpha \in (0,1)\), one has \(|\alpha - \varrho_j| = \varrho_j - \alpha\). Accordingly, \(|c|>1\) yields the inequality \(2a(j+1)>\varrho_{j} - \alpha\), which is true for all \(a > 0\). Therefore, this choice grants the existence of eigenvalues \(\lambda_j^+\) in the two cases \(0 < |a| < 1\) and \(|a| \geq 1\) detailed in the statement with the corresponding eigenfunctions.
        \item On the other hand, the choice of the negative root \(-\varrho_{j}\) for the eigenvalue in \eqref{eigenvaluescandidates-dalpha} leads to
        \[
            c = \frac{2a(j+1)}{\alpha + \varrho_{j}},
        \]
        whose corresponding inequality is \(2a(j+1)>\varrho_{j} + \alpha\). Assuming \(j > \tfrac{\alpha}{2a} -1\), the solution of the inequality \(2a(j+1) - \alpha>\varrho_{j}\) is the interval \(a > 1\) which, indeed, allows to take \(j \geq 0\). Consequently, this choice yields the existence of eigenvalues \(\lambda_j^-\) only in the latter case \(|a|>1\) considered in the statement with the associated eigenfunctions.
        \end{itemize}\end{proof}

\bigskip

\section{The Dirichlet space case} \label{Dirichlet}

When studying in detail the case of the Dirichlet space $D=H^2_\omega$, where $\omega_n  = n+1$, we found several issues that became hard to solve nicely. These arise from the lack of homogeneity of the differential equation alluded to in Remark \ref{reDir}. As a result, the already complicated situation with 3 different cases regarding the position of the parameter $c$ in Theorem \ref{thm:dalphanegativo} becomes far more cumbersome, relying on too many auxiliary previous results to be able to check the validity of all claims. We therefore state our observations in this area as an open problem and we present a scheme of a potential proof. Our scheme may be correct but we consider it possible for soft arguments to provide better insight. For some of the arguments, we could have focused on $\alpha=1$ only, but the observed difference between the point spectra for different (equivalent) renormings of the same spaces (when $\alpha \in (0,1)$) seems interesting in itself:

\begin{conjecture}\label{thm:d1}
	Let $a \in \mathbb{C}\setminus\{0\}$ and $\alpha > 0$. Consider the operator $V_{a} = M_{a-z}^*M_{a-z}$ acting on $H^2_\omega=D_\alpha$ equipped with the norm corresponding to the weight $\omega_n = \binom{n+\alpha}{n}$. Then $\sigma_{\text{\normalfont{p}}}(V_a) = \emptyset$. \end{conjecture}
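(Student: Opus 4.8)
The plan is to reduce, as usual, to $a>0$ by Lemma \ref{thm:apositivo}, and to note that an eigenfunction $h=\sum_{n\ge0}h_nz^n$ must have $h_0\ne0$ (otherwise the recurrence of Lemma \ref{lemma-recurrencia} propagates $h\equiv0$), so that we may normalise $h_0=1$. For $\omega_n=\binom{n+\alpha}{n}$ we have $\tfrac{\omega_{n+1}}{\omega_n}=\tfrac{n+1+\alpha}{n+1}$, and performing in Lemma \ref{lemma-recurrencia} the same manipulation as in the proof of Theorem \ref{thm:dalphanegativo} --- multiplying by $n+1$ and isolating the terms carrying the factor $\alpha$ --- one arrives at
\[
(n+1)\big(h_{n+1}-bh_n+h_{n-1}\big)=\tfrac{\alpha}{a}h_n-\alpha h_{n+1},\qquad n\ge0,
\]
with $b=\tfrac{a^2+1-\lambda}{a}$ and $P(z)=z^2-bz+1$. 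This is exactly the point flagged in Remark \ref{reDir}: the surviving $\alpha$-term multiplies $h_{n+1}$ and not $h_{n-1}$, so upon summing $\sum_{n\ge0}z^n(\cdots)$ it contributes $-\alpha\,\tfrac{h(z)-1}{z}$, which is no longer a polynomial in $z$ times $h$; the left-hand side still sums to $\big(P(z)h(z)\big)'$. Multiplying through by $z$ we obtain the \emph{inhomogeneous} first-order ODE
\[
zP(z)\,h'(z)=\Big(\tfrac{\alpha}{a}z-\alpha-zP'(z)\Big)h(z)+\alpha,\qquad h(0)=1.
\]

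From here the plan is the template of Theorems \ref{thm:dalphanegativo} and \ref{thm:dalpha01}: solve this ODE, single out its unique solution holomorphic at $0$ with value $1$ there (necessarily the generating function of the recurrence, hence the only candidate eigenfunction up to scaling), and decide when that solution belongs to $D_\alpha$. Decomposing $\tfrac{(\alpha/a)z-\alpha-zP'(z)}{zP(z)}$ into simple fractions, the residue at $z=0$ equals $-\alpha$, while the residues at the roots $c,1/c$ of $P$ (normalised so that $|c|\ge1$, and assuming first $c\ne\pm1$) solve the \emph{same} linear system \eqref{eqn1002} as in the Bergman-type case, so they are the numbers $\mu,\nu$ of \eqref{AB-dalpha}, still with $\mu+\nu=\alpha-2$. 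Hence the homogeneous solution is $\phi(z)=z^{-\alpha}(1-\tfrac zc)^{\mu}(1-cz)^{\nu}$, and variation of parameters --- the additive homogeneous constant being forced to vanish since for $\alpha>0$ the factor $z^{-\alpha}$ is genuinely singular at the origin --- yields
\[
h(z)=\alpha\,\phi(z)\int_0^z\frac{dt}{tP(t)\phi(t)}=k\,\Big(1-\tfrac zc\Big)^{\mu}(1-cz)^{\nu}\,F_1\!\Big(\alpha;\,\mu+1,\,\nu+1;\,\alpha+1;\,\tfrac zc,\,cz\Big),
\]
for some $k\ne0$, where $F_1$ is the Appell hypergeometric function in the variables $z/c$ and $cz$ obtained from the integral by Euler's representation. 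In Theorem \ref{thm:dalphanegativo} the candidate eigenfunction was the elementary power product $(1-z/c)^{\mu}(1-cz)^{\nu}$; the presence here of the $F_1$ factor, in place of such a product, is what makes the analysis delicate.

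Next I would split according to $c$. When $|c|=1$ and $c\ne\pm1$ we have $b\in(-2,2)$, i.e.\ $\lambda\in((a-1)^2,(a+1)^2)$; but (since $\omega_n=\binom{n+\alpha}{n}$ is a valid weight) Theorem \ref{superthm} locates the point spectrum outside that open interval, so no eigenvalue can occur there. When $c=\pm1$, that is $b=\pm2$ and $\lambda=(a\mp1)^2$, the ODE degenerates to a confluent form whose explicit solution is of the same exponential type as \eqref{hz-c1} (with an extra incomplete‑gamma‑type antiderivative coming from the inhomogeneity), and the failure of $h$ to lie in $D_\alpha$ follows as in Case 3 of Theorem \ref{thm:dalphanegativo} through Forelli--Rudin estimates. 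This leaves the case $|c|>1$, equivalently $\lambda\notin[(a-1)^2,(a+1)^2]$, in which $c\notin\overline{\D}$ and so the only possible singularity of $h$ inside $\D$ sits at $1/c$; near that point $\phi\sim(1-cz)^{\nu}$ and the integrand $t^{\alpha-1}(1-t/c)^{-\mu-1}(1-ct)^{-\nu-1}$ is $(1-ct)^{-\nu-1}$ times a function analytic and nonzero at $1/c$. A short local computation then shows that $h$ continues holomorphically across $1/c$ --- and is then, being holomorphic on $\{|z|<|c|\}\supset\overline{\D}$, a bona fide element of $D_\alpha$ and an eigenfunction --- exactly when one scalar vanishes: the regularised \emph{finite-part} value of $\int_0^{1/c}t^{\alpha-1}(1-t/c)^{-\mu-1}(1-ct)^{-\nu-1}\,dt$ if $\nu\notin\N$, and the derivative $\big[\tfrac{d^{\,j}}{dt^{\,j}}\big(t^{\alpha-1}(1-t/c)^{-\mu-1}\big)\big]_{t=1/c}$ (a Taylor coefficient of the Gauss ${}_2F_1$ governing $F_1$ near that singular point) if $\nu=j\in\N$.

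The hard part is therefore to prove that, for $a>0$, $\alpha>0$ and $|c|>1$ in the admissible range, these hypergeometric quantities never vanish; together with the two easy cases above this would give $\sigma_{\text{\normalfont p}}(V_a)=\emptyset$. This non-vanishing is the step we have not managed to settle: in Bergman-type spaces the three cases for $c$ are closed by inspection of an elementary power product, whereas here one must control the zero set of a true hypergeometric function along the one-parameter curve $b\mapsto c(b)$. We expect that a soft positivity or monotonicity argument for the integral should do it --- rather than explicit special-function identities --- but such an argument has eluded us, which is why we record the statement as a conjecture.
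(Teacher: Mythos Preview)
Your scheme is essentially the paper's own outline: the same inhomogeneous first-order ODE, the same Appell $F_1$ candidate (your $\mu,\nu$ are the negatives of the paper's because you work with the homogeneous solution rather than the integrating factor, which is why $\mu+\nu=\alpha-2$ for you versus $2-\alpha$ there), and the same acknowledgment that the case $|c|>1$ is the unresolved obstacle. One small improvement worth keeping: you dispose of $|c|=1$, $c\neq\pm1$ by invoking Theorem~\ref{superthm} rather than by estimating boundary singularities of the candidate, which is cleaner and in fact also rules out the endpoints $c=\pm1$ without any confluent analysis --- you could drop that separate case entirely.
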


The question makes sense as well for $\alpha \in (-1,0)$ but we are not going to pursue this any further.

\begin{proof}[Outline of a potential proof]
Assume $a> 0$. Our strategy is the same than in the proof of Theorem \ref{thm:dalphanegativo}. Following the same steps, we may transform the recurrence relation from Lemma \ref{lemma-recurrencia} into an equation on holomorphic functions:
	\[
	\sum_{n=0}^{\infty} z^{n} (n+1)\cdot (h_{n+1} -  bh_n + h_{n-1}) = -\alpha \sum_{n=0}^{\infty}z^{n} \bigg(h_{n+1}- \frac{h_n}{a}\bigg).
	\]
	The right-hand side above is \(-\alpha\big(\tfrac{h(z) - h(0)}{z} - \tfrac{1}{a}h(z)\big)\), while the left-hand side can be identified with $h' -b (zh)' + (z^2h)'$. Separating terms on $h$ and $h'$, we have a first-order ODE that, after some algebraic manipulations becomes 
	\begin{equation}\label{eq:differential_eq_dirichlet}
		h'(z) + h(z) \frac{\alpha - \big(b + \tfrac{\alpha}{a}\big)z + 2z^2}{zP(z)} = \frac{\alpha h(0)}{zP(z)}.
	\end{equation}
	
Denote by $c$ and $1/c$ the complex roots of $P$ and assume $c \neq 1/c$ (i.e. that $c \neq \pm 1$), and $|c| \geq 1$. Now, in order to solve the equation, we multiply both sides by the integrating factor
	\[
		K(z) = z^\alpha\bigg(1-\frac{z}{c}\bigg)^{\mu}(1-cz)^{\nu},
	\]
	where the exponents $\mu, \nu \in \mathbb{C}$ must be
	\[
		\mu = 1 + \frac{\alpha}{a}\frac{a-c}{c^2-1} = 1 + \frac{\alpha}{a}\frac{a-c}{bc -2}\qquad \text{and}\qquad \nu =1-\alpha -\frac{\alpha}{a} \frac{a-c}{c^2 -1} =  1-\alpha -\frac{\alpha}{a} \frac{a-c}{bc - 2},
	\]
	fulfill the relation $\mu+\nu = 2 - \alpha$ and allow to transform the differential equation into
 \begin{equation}\label{eq:ode-desarrollada}
    h'(z) + h(z)\bigg(\frac{\alpha}{z}  + \frac{\nu}{z -1/c} + \frac{\mu}{z - c}\bigg) = h(0)\left(\frac{\alpha}{z}  - \frac{\tfrac{\alpha c^2}{c^2 -1}}{z -1/c} + \frac{\tfrac{\alpha}{c^2 -1}}{z - c}\right).
 \end{equation}
 Therefore, the general solution to our differential equation \eqref{eq:differential_eq_dirichlet} is
	\[
		h(z) = \frac{\alpha h(0) \medint\int \mfrac{K(z)}{zP(z)}\, \mathrm{d}z + k}{K(z)} = \frac{\alpha h(0) \medint\int z^{\alpha-1}\big(1-\tfrac{z}{c}\big)^{\mu-1}(1-cz)^{\nu-1} \, \mathrm{d}z + k}{z^\alpha\big(1-\tfrac{z}{c}\big)^{\mu}(1-cz)^{\nu}}, \quad \text{with } k \in \mathbb{C}.
	\]
	
	The latter primitive can be expressed in terms of hypergeometric functions. Recall that for each $\beta \in \mathbb{C}$ and $n \geq 0$, the \emph{Pochhammer symbol} (or \emph{rising factorial}) is given by
    \[
        (\beta)_n \coloneqq \begin{cases} \beta(\beta +1 ) \cdots (\beta + n-1), & \text{if } n = 1,2,3,\ldots \\ 1, &\text{otherwise}.\end{cases}
    \]
    The \emph{Gauss hypergeometric function} ${}_2 F_1(\beta,\gamma;\zeta;z)$ is defined on $\mathbb{D}$ by the power series
    \[
        {}_2 F_1(\beta,\gamma;\zeta;z) \coloneqq \sum_{n = 0}^\infty \frac{(\beta)_n (\gamma)_n}{(\zeta)_n} \frac{z^n}{n!}, \qquad |z| < 1
    \]
    for every $\beta, \gamma \in \mathbb{C}$ and $\zeta \in \mathbb{C}\setminus\{0,-1,-2,\ldots\}$. Whenever $\beta$ or $\gamma$ belong to $\{0,-1,-2,\ldots\}$, the function ${}_2 F_1(\beta,\gamma;\zeta;z)$ reduces to a polynomial. If $\beta,\gamma \not\in \{0,-1,-2,\ldots\}$, it can be analytically continued to $\mathbb{C}\setminus [1,+\infty)$. Additionally, we must consider the \emph{first Appell hypergeometric function in two variables}
	\begin{equation}\label{eq:appell}
		F_1(\beta;\gamma,\gamma'; \zeta; x,y) \coloneqq \sum_{m,n = 0}^\infty \frac{(\beta)_{m+n}(\gamma)_m(\gamma')_n}{(\zeta)_{m+n}}\frac{x^m}{m!} \frac{y^n}{n!}, \quad (x,y) \in \mathbb{D}^2
	\end{equation}
	where the parameters $\beta, \gamma, \gamma' \in \mathbb{C}$ and $\zeta \in \mathbb{C}\setminus \{0,-1,-2,\ldots\}$.  We leave the case $\nu =0$ to the reader and suppose $\nu \neq 0$. Then, the primitive
 \[
    \int z^{\alpha-1}\bigg(1-\frac{z}{c}\bigg)^{\mu-1}(1-cz)^{\nu-1} \, \mathrm{d}z
 \]
 is equal to
\[
     \mfrac{z^\alpha}{\alpha} F_1\Big(\alpha; 1 - \mu, 1-\nu; \alpha+1;\mfrac{z}{c},cz\Big)+k, \]  with $k \in \mathbb{C}$. Thus, taking
 \[
    F(z) = F_1\Big(\alpha; 1 - \mu, 1-\nu; \alpha+1;\mfrac{z}{c},cz\Big)\]
the general solution of the differential equation \eqref{eq:differential_eq_dirichlet} is
 \begin{equation}\label{eq:general-solution-dirichlet}
		h(z) =  \frac{h(0)z^\alpha F(z) + k}{z^\alpha\big(1-\tfrac{z}{c}\big)^{\mu}(1-cz)^{\nu}}, \quad \text{with } k \in \mathbb{C}.
\end{equation}

Observe that, since $\alpha>0$, for this function $h(z)$ to be holomorphic at the origin we must take $k = 0$, yielding a candidate eigenfunction (taking $h(0) = 1$):
	\begin{equation}\label{eq:candidate-eigenfunction-dirichlet}
		h(z) =  \frac{F(z)}{\big(1-\tfrac{z}{c}\big)^{\mu}(1-cz)^{\nu}}.
	\end{equation}
	This $h$ is holomorphic, at least, on \(|z| < 1/|c|\). In conclusion, $\lambda \in \sigma_{\text{\normalfont p}}(V_{a}) \colon D_\alpha \to D_\alpha$ if and only if the function $h$ in \eqref{eq:candidate-eigenfunction-dirichlet} belongs to $D_\alpha$. From here, some references that may contribute to a solution include \cite{encyclopedia-special-functions, anal-cont, erdelyi, handbook-hypergeometric, hurwitz, klein, olsson, ReedSimon, runckel, vanbleck}.  Depending on $c$ there are at least 2 cases:

	\medskip
	\begin{itemize}
		\item\textbf{Case 1:} $|c| = 1$.
	\end{itemize}
	
	\smallskip

The singularities at the boundary of the candidate eigenfunction should be too large for $h$ to be in $D_\alpha$.
    
    \medskip
    \begin{itemize}
		\item\textbf{Case 2:} $|c|>1$.
    \end{itemize}

    \smallskip

This is the most intricate case. We found no way around studying subcases in terms of whether $\nu \in \R \backslash \Z$, and of the sign of $\nu$.
\end{proof}

\bigskip

\section{Further remarks} \label{furthersect}

We conclude now by mentioning some directions of potential research.

\begin{remark}
\begin{enumerate}
\item[(a)] Theorem 1 from \cite{killip-simon-annals}, that we used as a reference for Theorem \ref{thmKilSim}, is a stronger statement: it characterizes Hilbert-Schmidt perturbations of free Jacobi matrices as those that satisfy 4 criteria (two of them are the Blumenthal-Weyl criterion and Lieb-Thirring bound). The other two conditions refer to the spectral measure and these are also satisfied by our corresponding measures. Determining the spectral measure seems an interesting future objective and, for that end, one needs to take into account the whole of Killip and Simon's Theorem. In particular, information about the absolutely continuous part of the spectral measure could be contained in the choice of $a$ value determining $V_a$. In \cite{killip-simon-annals}, the authors also provide information about trace class perturbations of the free Jacobi matrix. The Bergman-type weights will not provide trace-class perturbations but rather Schatten class for all $p>1$ (and so will Dirichlet-type weights). Perhaps something stronger can be said about the spectral measure from that information than what follows from their Theorem 1.

\item[(b)] Regarding extension to other functions, the methods that we described here make a strong use of the fact that $V_a$ is given by a polynomial $f$ of degree 1. When increasing the degree of $f$ to $d \in \N$, $d \geq 2$, one can expect some recurrence relation with $2d+1$ terms to describe the properties of eigenfunctions. The decomposition of a polynomial in its irreducible factors could be leveraged to make use of Jacobi matrices properties. If so, then a description of the pheonomenon for all polynomials could be a reasonable aim. If this or any other program for studying the spectra of $M^*_fM_f$ for polynomial $f$ is succesful, then a next reasonable step is to check spectral properties of $V_{f_n}$ that are preserved by convergence of a sequence of different polynomial symbols $\{f_n\}$ to a function $f \in H^2_\omega$. Is there any kind of stability of the properties of the spectrum under strong convergence hypotheses? Cyclic functions are preserved by certain kinds of convergence \cite{AguileraSeco} and their defining features in terms of spectrum should also be preserved.

\item[(c)] The operator $V:=M^*_fM_f$ has been largely studied in the context of Toeplitz operators acting on Hardy spaces. Thus, there is a large class of problems waiting to be studied on other spaces $H$. This is specifically interesting to the authors when $H$ is the Dirichlet space. If we denote $\hat{V}$ the corresponding Toeplitz operator, examples  of what we want to call our attention to include studying properties of $V-\hat{V}$ such as: compactness, Hilbert-Schmidt, trace-class or other $p-$Schatten classes membership. One could also explore the relations between model spaces and kernels of $M^*_f$; or the effects of the positivity of $I-V/\|f\|^2_{M_H}$, where $\|\cdot\|_{M_H}$ is the multiplier norm.

\item[(d)] Approximation theory has been applied in the study of cyclic functions leading to the introduction of so-called optimal polynomial approximants (or \emph{opa}, see \cite{RevMatIber} for instance). These are, given $f$, the polynomials making $p_nf$ the orthogonal projections of $1$ onto a sequence of spaces $H_n$ that exhaust the invariant subspace generated by $f$, space usually denoted by $[f]$. The convergence of these orthogonal projections to $1$ is equivalent to the cyclicity of the function. Coefficients of opa are given by matrices $A_n$ that, when $f$ is a multiplier, happen to represent the action over monomials of the truncations of $M^*_fM_f$. Bearing in mind Theorem 4.1 in \cite{AguileraSeco}, one may be able to obtain additional spectral information about $V_f$ from properties of the matrices $A_n$. 

\item[(e)] Many Hilbert spaces of analytic functions over the unit disc do not satisfy our assumptions here and we do not know what the spectrum of $V_a$ will look like in such cases but some of our observations will extend, at least when the polynomials form an orthogonal basis. Non-extreme de Branges-Rovnyak or other reproducing kernel Hilbert spaces in which cyclicity may be studied for polynomial functions are susceptible to analogue analysis but the situation is expected to be more complicated.

\item[(f)] Some of the techniques here may be used to deal with other multiplier functions $f$, but we do not know what the natural framework will be in order to study our operator $V$ for functions $f$ that are not multipliers, for which $M_f$ will only be densely defined. Then the cyclic behaviour of $f$ in $H^2_\omega$ is still expressed in the information about the action of $V$ on monomials, and since monomials are themselves multipliers, one may find a solution based on classical techniques for densely defined operators, but we encourage the reader to be wary of this potential problem. 
\end{enumerate}
\end{remark}

At this point, the road ahead promises enough mathematical adventures.

\end{document}